\documentclass{zml-article}


\usepackage[noabbrev,capitalise,english]{cleveref}

\usepackage{amsmath}
\usepackage{amscd}
\usepackage{amssymb}
\usepackage{amsfonts}
\usepackage{mathrsfs}
\usepackage{euscript}
\usepackage{oldgerm}
\usepackage{calligra}
\usepackage{mathrsfs}
\usepackage{bold-extra}
\usepackage{pifont}
\usepackage{hyperref}
\usepackage{url}
\usepackage{mathptmx}

\usepackage{titlesec}
\counterwithin{section}{part}


\newtheorem{theorem}{\textbf{\textsc{Theorem}}}[section]
\newtheorem{definition}[theorem]{\textbf{\textsc{Definition}}}

\newtheorem{convention}[theorem]{\textbf{\textsc{Convention}}}
\newtheorem{lemma}[theorem]{\textbf{\textsc{Lemma}}}
\newtheorem{corollary}[theorem]{\textbf{\textsc{Corollary}}}
\newtheorem{proposition}[theorem]{\textbf{\textsc{Proposition}}}
\newtheorem{example}[theorem]{\textbf{\textsc{Example}}}

\newtheorem{question}[theorem]{\textbf{\textsc{Question}}}

\newtheorem{remark}[theorem]{\textbf{\textsc{Remark}}}


\title{\bf On Chaitin's Heuristic Principle\\ and Halting Probability}

   \shorttitle{\sl On Chaitin's Heuristic Principle}

\author[1]{\sc Saeed  Salehi}

\affil[1]{\sl }

\corrauthor{Saeed Salehi}{root@SaeedSalehi.ir}



\abstract{It would be a heavenly reward if there were a method of weighing theories and sentences in such a way that a theory could never prove a heavier sentence (Chaitin’s Heuristic Principle). Alas, no satisfactory measure has been found so far, and this dream seemed too good ever to come true. In the first part of this paper, we attempt to revive Chaitin’s lost paradise of heuristic principle as much as logic allows. In the second part, which is a joint work with M. Jalilvand and B. Nikzad, we study Chaitin’s well-known constant Omega and show that this number is not a probability of halting the randomly chosen input-free programs under any infinite discrete measure. We suggest several methods for defining halting probabilities using various measures.

\medskip

\noindent
{\em Keywords}.
Algorithmic Information Theory,
Chaitin's Constant,
Chaitin's Heuristic Principle,
Incompleteness Theorem,
Halting Probability,
Kolmogorov Axioms,
Kolmogorov Complexity,
Omega Number,
Probability Measure,
Program-Size Complexity,
Random Numbers.

 \medskip

\noindent
{\em 2020 Math Subject Classification}.
03F40,   	
68Q30,
60A10, 28A05, 68Q04, 03D10.
}

\begin{document}

\maketitle

\part{Weighing Theories: On {Chaitin}'s Heuristic Principle}

\section{Introduction and Preliminaries}\label{sec:intro}
The history of the heuristic principle goes back to at least  1974,  when {Chaitin} wrote in \cite{Chaitin74} that ``there are circumstances in which [$\dots$] it is possible to reason in the following manner.
If a set of theorems constitutes $t$ bits of information,
and a set of axioms contains less than $t$ bits of information, then it is impossible to deduce these theorems from these axioms'' (p.~404).
Let us put this formally as

\begin{definition}[Heuristic Principle, {\sf HP}]\label{def:hp}
\noindent

\indent
A real-valued mapping $\mathcal{W}$ on theories and sentences \textup{(}over a fixed language\textup{)} is said to satisfy {Chaitin}'s heuristic principle, {\sf HP}, when for every theory $T$ and every sentence $\psi$, if we have  $\mathcal{W}(\psi)\!>\!\mathcal{W}(T)$ then $T$ cannot prove  $\psi$; in symbols

\qquad \qquad \qquad
\textup{(}{\sf HP}\textup{)} \quad $\mathcal{W}(\psi)\!>\!\mathcal{W}(T) \;\Longrightarrow\; T\nvdash\psi$.
\hfill \ding{71}
\end{definition}

An equivalent formulation is  ``{the theorems weigh no more than the  theory}'':
$T\vdash\psi \,\Rightarrow\, \mathcal{W}
(T)\geqslant\mathcal{W}(\psi)$.
The principle was suggested after {Chaitin}'s (proof of {G\"odel}'s first) incompleteness theorem, which says that for every sufficiently strong and consistent theory $T$, there exists a constant $c$ such that for no $w$, $T$  can  prove that ``the {Kolmogorov} complexity of $w$ is greater than $c$''
 (below, we will see a formal  definition of  {Kolmogorov} complexity).
It was criticized by several authors, probably  the first time in 1989 by {van Lambalgen}, who wrote in \cite{vanLamb89} that ``Chaitin's views enjoy a certain popularity'' (p.~1389) and that ``Chaitin's mathematics does not support his philosophical
conclusions'' (p.~1390).
He
concluded ``that the {\em complexity} of the axioms is not a good measure of information'' (p.~1395; original italics).

{Chaitin} retreated, taking just one step back and not completely, in one of his 1992 papers \cite{Chaitin92}:
``In fact, {\em any} set of axioms that yields an infinite
set of theorems {\em must} yield
theorems with arbitrarily high complexity!
[$\dots$]
So what is to become of our heuristic principle that `A set of axioms of
complexity $N$ cannot yield a theorem of complexity substantially greater than
$N$'??? An improved version of this heuristic principle, which is not really any
less powerful than the original one, is this: `One cannot prove a theorem
from a set of axioms that is of greater complexity than the axioms {\em and know}
that one has done this. I.e., one {\em cannot realize}  that a theorem is of
substantially greater complexity than the axioms from which it has been
deduced, if this should happen to be the case''' (p.~115, original italics).
He then tries
``to avoid all these problems and discussions by
rephrasing [his] fundamental principle in the following totally unobjectionable
form: `A set of axioms of complexity $N$ cannot yield a theorem that asserts
that a specific object is of complexity substantially greater than $N$.' It was
removing the words `asserts that a specific object' that yielded the slightly
overly-simplified version of the principle'' (p.~116).

The fact of the matter is that {Chaitin}'s last statement, that ``a set of axioms of
complexity $N$ cannot yield a theorem that asserts that a specific object is of complexity
substantially greater than $N$'', is nothing but a reformulation of his incompleteness theorem, and the mildly diluted statement, that ``one cannot prove a theorem from a set of axioms that is of greater complexity
than the axioms {\em and know} that [that]   theorem
is of substantially greater complexity than the axioms from which it has been deduced'', follows from his incompleteness theorem and is indeed far from the original {\sf HP}.

The criticism continued,  for good reasons. In 1996,  {Fallis} \cite{Fallis96} noted that ``for any sound formal system
$FS$, there are infinitely many formulas which have greater complexity than
$FS$ and are provable in $FS$.
[$\dots$] Since only finitely many of the (infinitely many)
formulas provable in $FS$ have a complexity less than the complexity of $FS$,
there are infinitely many formulas provable in $FS$ with greater complexity
than $FS$'' (p.~265). As a result, Chaitin's claim (that ``if a theorem contains more information than a given set of
axioms, then it is impossible for the theorem to be derived from the axioms'' \cite[p.~264]{Fallis96}) is false.

In 1998, {Raatikainen}, maybe unaware of  \cite{Fallis96} as he did not cite it, wrote in
\cite{Raatikainen98}
 that ``Chaitin's metaphor that `if one has ten pounds of axioms and a twenty-pound theorem, then that theorem cannot be derived from those axioms', if referring to Chaitin's theorem, seems to commit [the] confusion [that] it compares the complexity of axioms as {\em mentioned} and the complexity asserted by a theorem when {\em used}'' (p.~581; emphasis added).

Anyhow,  {\sf HP} was too beautiful a dream to let go easily. In 2004, {Sj\"ogren} designed, in his licentiate thesis  \cite{Sjorgen04}, ``a measure of the power'' of theories and sentences that satisfy {\sf HP}, where theories  extended Peano's Arithmetic and the sentences  belonged to a rather narrow class of  arithmetical translations of ``the letterless modal sentences'' of G\"odel-L\"ob logic. A report of the results was presented later in a 2008 paper \cite{Sjorgen08}.

In 2005, {Calude} and {J\"urgensen} claimed   in \cite{Caludetal05} that they ``prove that the `heuristic principle' proposed by Chaitin [$\dots$] is correct if we measure the complexity of a string by the difference
between the program-size complexity and the length of the string, [their] $\delta$-complexity'' (pp.~3--4).
The $\delta$-complexity was defined as $\delta(x)=\mathscr{K}(x)\!-\!|x|$, where $|x|$ denotes the length of $x$ and $\mathscr{K}(x)$, the {Kolmogorov} complexity of $x$, is the length of the shortest input-free program (in a fixed programming language) that outputs only $x$ (and then halts).
The main result of \cite{Caludetal05} reads: ``Consider a finitely-specified, arithmetically sound (i.e. each arithmetical
proven sentence is true), consistent theory strong enough to formalize arithmetic, and denote
by ${T}$ its set of theorems [$\dots$] Let $g$ be a G\"odel numbering
for ${T}$. Then, there exists a constant $N$  [$\dots$]  such that $T$
contains no $x$ with $\delta_g(x)>N$'' (p.~9, Theorem 4.6).
This claim was praised by, e.g., {Grenet} (in 2010), who wrote in  \cite{Grenet10} that
``In [2] Chaitin's heuristic
principle is proved to be valid for an appropriate measure [that]  gives us some indication about the reasons certain
statements are unprovable'' (p.~404), and that his study led him
``to modify the definition of $\delta_g$
in order to correct some of the proofs''  (p.~423).

Unfortunately, $\delta$ does not satisfy {\sf HP}, as can be seen by the following argument. Let $\bot$ denote a contradiction, like $p\!\wedge\!\neg p$ or $\exists x(x\!\neq\!x)$. There are two fixed, and rather small, natural numbers $m$ and $n$, such that for every formula $\varphi$ we have $|\bot\!\rightarrow\!\varphi|=|\varphi|+m$ and $\mathscr{K}(\varphi)\leqslant \mathscr{K}(\bot\!\rightarrow\!\varphi)+n$.
For the latter, notice that one can make some small  changes to the shortest input-free program that outputs only $\bot\!\rightarrow\!\varphi$, to get an input-free  program, not necessarily with the shortest length, that outputs only $\varphi$; and those changes are uniform and do not depend on $\varphi$.\footnote{Actually, for most of the standard formalisms and frameworks, it suffices to take $m=2$ and $n=0$.} Now, fix an arbitrary theory $T$ and  assume that $\delta(T)=t$; one could take $t$ to be the constant $N$ in the above quoted Theorem~4.6 of \cite{Caludetal05}. Also, fix a sentence $\textgoth{S}$ with $\delta(\textgoth{S})>t+m+n$ (which should exist by \cite[Corr.~4.2, p.~6]{Caludetal05}). We have

\begin{tabular}{rclcl}
  $\delta(\bot\!\rightarrow\!\textgoth{S})$ & $=$ & $\mathscr{K}(\bot\!\rightarrow\!\textgoth{S})- |\bot\!\rightarrow\!\textgoth{S}|$  & & by the definition of $\delta$, \\
    & $\geqslant$ & $\mathscr{K}(\textgoth{S})-n-(|\textgoth{S}|+m)$   & &   by the choice of $m,n$, \\
   & $=$  & $\delta(\textgoth{S})-(m+n)$ & &  by the definition of $\delta$, \\
    &  $>$ & $t$ & &  by the choice of $\textgoth{S}$,  \\
  &  $=$ & $\delta(T)$ & &  by the definition of $t$.  \\
\end{tabular}

So, by \cite[Thm.~4.6]{Caludetal05}, quoted above,
$T\nvdash(\bot\!\rightarrow\!\textgoth{S})$ since $\delta(\bot\!\rightarrow\!\textgoth{S})\!>\!\delta(T)$; but $\bot\!\rightarrow\!\textgoth{S}$, for every sentence $\textgoth{S}$, is a tautology, and so should be provable in every theory. One can give a similar argument by using the formula $\textgoth{S}\!\rightarrow\!\top$, where $\top$ denotes a tautology, like $p\!\vee\!\neg p$ or $\forall x(x\!=\!x)$; or by using the tautologies  $p\!\rightarrow\!(\textgoth{S}\!\rightarrow\!p)$
or $\neg p\!\rightarrow\!(p\!\rightarrow\!\textgoth{S})$
or $[(p\!\rightarrow\!\textgoth{S})
\!\rightarrow\!p]\!\rightarrow\!p$, for a fixed, short, and uncomplicated formula $p$.\footnote{It would be a good exercise to go through the arguments of \cite{Caludetal05} and \cite{Grenet10}, pinpoint the possible errors, and see what went wrong.}
 Summing up, the following holds  according to the cited results and our argument  above:
\begin{proposition}[{\sf HP} does not hold so far]\label{prop:nohp}
\noindent

\indent
Neither the {Kolmogorov}-complexity \textup{(the length of the shortest input-free program that outputs only the axioms, or the theorems, of the theory)} nor the $\delta$-complexity \textup{(the difference between the {Kolmogorov}-complexity and the length)}  satisfies {\sf HP}.
\hfill\ding{113}
\end{proposition}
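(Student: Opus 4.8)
The plan is to treat both halves as instances of a single observation: a tautology is provable in every theory, so any weight function under which tautologies can be made arbitrarily heavy must violate {\sf HP}. For the $\delta$-complexity this is precisely what the displayed computation preceding the proposition accomplishes — choosing $\bot\!\rightarrow\!\textgoth{S}$ with $\delta(\textgoth{S})>t+m+n$ produces a tautology whose $\delta$-weight exceeds $\delta(T)$ — so for that case I would simply invoke the argument already carried out above.

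For the {Kolmogorov} complexity $\mathscr{K}$ I would rerun the same construction, which is in fact a little simpler since no length term intervenes. First I fix any theory $T$ that is finitely specified, so that $k=\mathscr{K}(T)$ is a fixed finite number. Next I use that $\mathscr{K}$ is unbounded on the infinite computable set of sentences — for each bound $c$ there are fewer than $2^{c+1}$ programs of length at most $c$, hence only finitely many strings of complexity at most $c$ — to pick a sentence $\textgoth{S}$ with $\mathscr{K}(\textgoth{S})>k+n$, where $n$ is the uniform constant fixed above satisfying $\mathscr{K}(\varphi)\leqslant\mathscr{K}(\bot\!\rightarrow\!\varphi)+n$ for all $\varphi$. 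Then
\[
\mathscr{K}(\bot\!\rightarrow\!\textgoth{S})\;\geqslant\;\mathscr{K}(\textgoth{S})-n\;>\;k\;=\;\mathscr{K}(T),
\]
while $\bot\!\rightarrow\!\textgoth{S}$ is a tautology and therefore $T\vdash(\bot\!\rightarrow\!\textgoth{S})$. Thus the single pair $\bigl(T,\ \bot\!\rightarrow\!\textgoth{S}\bigr)$ witnesses that $\mathscr{K}$ fails {\sf HP}.

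The two ingredients needing a word of justification are both already isolated in the preceding discussion: the uniform bound $\mathscr{K}(\varphi)\leqslant\mathscr{K}(\bot\!\rightarrow\!\varphi)+n$, which holds because a small, $\varphi$-independent modification of the shortest program for $\bot\!\rightarrow\!\varphi$ yields a program for $\varphi$, and the unboundedness of $\mathscr{K}$ on sentences, which is the standard counting argument recalled above. I do not expect a real obstacle here; the only thing to notice is that the tautology device used for $\delta$ transfers verbatim to $\mathscr{K}$, the sole extra input being that $\mathscr{K}$ takes arbitrarily large values on sentences.
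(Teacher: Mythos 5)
Your proposal is correct. For the $\delta$-complexity you simply invoke the displayed computation preceding the proposition, which is exactly what the paper does, so that half is identical. For the Kolmogorov-complexity half, however, the paper does not rerun the tautology construction: it rests on the cited observations of Fallis and of Chaitin himself, namely that any theory with infinitely many theorems must prove theorems of arbitrarily high complexity, because only finitely many strings have complexity below the fixed bound $\mathscr{K}(T)$ --- the counting argument is applied directly to the set of theorems, and no auxiliary constant $n$ is needed. You instead apply the counting to the set of \emph{all} sentences to find a heavy $\textgoth{S}$ and then import it into the theorems via the tautology $\bot\!\rightarrow\!\textgoth{S}$, paying the constant $n$ for the implication. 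Both arguments are valid; yours has the virtue of treating the two halves uniformly and of being self-contained rather than resting on a citation, while the paper's route for $\mathscr{K}$ is marginally more economical, since any infinite set of theorems (e.g.\ the tautologies themselves) already contains members of arbitrarily high complexity without any wrapping device.
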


This is not the end of the story. In 2021, {Porter} \cite{Porter21} hoped to show   ``a possible vindication of Chaitin's interpretation [{\sf HP}],
drawing upon the [$\dots$] recent work of Bienvenu et al.\ \cite{BRSTV14} that extends and refines [Chaitin's incompleteness theorem]'' (p.~149). He was content  with a weak version of {\sf HP}, as his ``statement is significantly weaker than being a full-fledged instance of
Chaitin's heuristic principle'' (p.~160).

In the rest of this paper, we suggest some ways of weighing theories and sentences that satisfy {\sf HP}.

\section{Weighing Theories}\label{sec:wei}
We can work with  {\em theories} only and dismiss the {\em sentences}; it   suffices to take the weight of a sentence $\psi$ to be the weight of the theory $\{\psi\}$. For us, a {\em theory} is an arbitrary {\em set of sentences} (we will consider {\sc re} theories later in \S\S  \ref{sec:re}).
The most trivial mapping that satisfies {\sf HP} is the constant weight (all the theories have a fixed weight): since $\mathcal{W}(U)>\mathcal{W}(T)$ can never hold for any theories $T$ and $U$, then ``$\mathcal{W}(U)>\mathcal{W}(T)\,\Rightarrow\,T\nvdash U$'' is vacuously true. In the sequel, we note that {\sf HP} can be satisfied by some other, less trivial weights.

\subsection{\textsf{HP}--Satisfying Weights}
{\sf HP} forces the existence of a minimum and a maximum.

\begin{theorem}[$\mathscr{W}$ has a min and a max if it satisfies {\sf HP}]\label{minmax}
\noindent

\indent
Every weighting $\mathscr{W}$ that satisfies {\sf HP} has a minimum and a maximum.
\end{theorem}
\begin{proof}
\noindent

\indent
For a theory $T$, we have $\bot\vdash T\vdash\top$, where we recall that $\bot$ is a contradiction and $\top$ is a tautology. Now, by {\sf HP}, we have $\mathscr{W}(\bot)\geqslant\mathscr{W}(T)
\geqslant\mathscr{W}(\top)$. Thus, $\min_{\mathscr{W}}=\mathscr{W}(\top)$ and $\max_{\mathscr{W}}=\mathscr{W}(\bot)$.
\end{proof}

\begin{corollary}[integer-valued {\sf HP}-satisfying weights are finitely many valued]\label{corollary1}
\noindent

\indent
If an integer-valued weighing, like the Kolmogorov complexity $\mathscr{K}$ or its difference with the length (the $\delta$ complexity), satisfies {\sf HP}, then it can take finitely many values only.
\hfill\ding{113}
\end{corollary}

We will see in Theorem~\ref{thm:noep} below that there are plenty of finitely many valued weightings that satisfy {\sf HP}.
We already noted in Proposition~\ref{prop:nohp} that neither $\mathscr{K}$ nor $\delta$ satisfies {\sf HP}.
Now, we notice a couple of properties of arithmetical theories that extend Robinson's Arithmetic $\textit{\textbf{Q}}$, which imply that finitely many weights are not suitable for these theories, even if they satisfy {\sf HP}.

\begin{theorem}[Arithmetical theories form an infinite and a pseudo-dense  hierarchy]\label{thm:arith}
\noindent

\begin{enumerate}
  \item There is an infinite hierarchy of finitely axiomatizable  arithmetical theories containing $\textit{\textbf{Q}}$ with strictly increasing proof power.
  \item Between every two finite extensions of $\textit{\textbf{Q}}$, one of which is a strict sub-theory of the other, there exists another finite theory that sits strictly between the two. In other words, if $\textit{\textbf{Q}}\subseteq S\subsetneq T$ for two finite theories $S$ and $T$, then there is a finite theory $U$ such that $S\subsetneq U\subsetneq T$.
\end{enumerate}
\end{theorem}
\begin{proof}
\noindent

\indent
(1):  For a consistent theory $T$ that contains  $\textit{\textbf{Q}}$, by G\"odel-Rosser's Incompleteness Theorem, there exists a sentence $\rho$, called a Rosser Sentence of $T$, such that $T$ can neither prove nor disprove it. Therefore, both theories $T_0=T+\rho$ and $T_1=T+\neg\rho$ are consistent.
Now, $T_0$ and $T_1$ strictly contain $T$ (and $\textit{\textbf{Q}}$), and they are both finite if $T$ is so. Continuing this way, one can show the existence of an infinite (strictly increasing in power) hierarchy of finite arithmetical theories.

\indent
(2): Let us identify a finitely axiomatizable theory with a single sentence that axiomatizes it (which can be the conjunction of the finitely many axioms of the theory). Let $S$ and $T$ be two sentences such that they both contain $\textit{\textbf{Q}}$ (i.e., $S,T\vdash\textit{\textbf{Q}}$) and $S$ is a strict subtheory of $T$ (i.e., $T\vdash S$ and $S\nvdash T$). So, the theory $W=S+\neg T$ is consistent; it also contains $\textit{\textbf{Q}}$.
By G\"odel-Rosser's Incompleteness Theorem, there exists a sentence $\rho$ independent from $W$. So, $W\nvdash\rho$ and $W\nvdash\neg\rho$. Let $U=T\vee (S\wedge\rho)$. This sentence contains $\textit{\textbf{Q}}$, as $U\vdash\textit{\textbf{Q}}$ follows from $T\vdash\textit{\textbf{Q}}$ and $S\vdash\textit{\textbf{Q}}$ (with $S\wedge\rho\vdash S$). Obviously, $U$ lies between $S$ and $T$, since $T\vdash U$ is clear from the tautology $T\rightarrow T\vee X$, and $U\vdash S$ follows from the assumption $T\vdash S$. We show that $U$ strictly lies between $S$ and $T$, i.e., $U\nvdash T$ and $S\nvdash U$.
The former follows from $W\nvdash\neg\rho$, which implies $S\wedge\neg T\nvdash\neg\rho$, or equivalently $S\wedge\rho\nvdash T$, thus $U\nvdash T$. The latter follows from $W\nvdash\rho$, which implies $S\wedge\neg T\nvdash\rho$, thus $S\nvdash T\vee\rho$, so $S\nvdash U$. Essentially the same argument shows that $U'=T\vee(S\wedge\neg\rho)$ too strictly lies between $S$ and $T$. It can also be shown that $U$ and $U'$ are incomparable with each other, i.e., $U\nvdash U'$ and $U'\nvdash U$.
\end{proof}

Consider the finite theory $\textit{\textbf{Q}}$, and let $\rho$ be a Rosser sentence of it. So, $\textit{\textbf{Q}}\subsetneq\textit{\textbf{Q}}+\rho$. By Theorem~\ref{thm:arith}, there are finite and consistent theories $\{T_m\}_m$, where $m$ ranges over integer numbers, such that
$$\textit{\textbf{Q}}\subsetneq\cdots\subsetneq
T_{-2}\subsetneq T_{-1}\subsetneq T_0\subsetneq
T_1\subsetneq T_2\subsetneq\cdots\subsetneq\textit{\textbf{Q}}+\rho.$$

So, an {\sf HP}-satisfying weight should contain a 
decreasing and 
an increasing sequence of values.
Thus, as a corollary of Corollary~\ref{corollary1}, we get the following.

\begin{corollary}[integer-valued {\sf HP}-satisfying weights for arithmetical theories]\label{corollary2}
\noindent

\indent
If an integer-valued weighing, like the Kolmogorov complexity $\mathscr{K}$ or its difference with the length (the $\delta$ complexity), satisfies {\sf HP}, then  it will have a fixed value for infinitely many distinct arithmetical theories.
\hfill\ding{113}
\end{corollary}

\subsection{Finitely Many Weights}
Let $\nu$ be a propositional evaluation from formulas to $\{0,1\}$, where $0$ indicates the \textsf{falsum} and $1$ the \textsf{truth}. If $P\rightarrow Q$ holds, then we have $\nu(P)\leqslant\nu(Q)$. This suggests the following:

\begin{definition}[$\mathcal{W}_V,
\mathcal{W}_{\mathfrak{M}}$]
\label{def:mu}
\noindent

\indent
Let $\nu$ be a mapping from propositional atoms to $\{\textsf{false},\textsf{true}\}$, and let $V$ be its truth-table extension to all the propositional formulas. For a formula $\varphi$, let $V\vDash\varphi$ mean that $V(\varphi)=\textsf{true}$; and for a theory $T$ let $V\vDash T$ mean that $V\vDash\tau$ holds for each and every element (axiom) $\tau$ of $T$. Let $\mathcal{W}_V$ be the following mapping, where $T$ is a propositional theory.
$$\mathcal{W}_V(T)=\begin{cases}
                     0 & \mbox{if } V\vDash T,  \\
                     1 & \mbox{if } V\nvDash T.
                   \end{cases}$$
Likewise, for a fixed first-order structure $\mathfrak{M}$, let $\mathcal{W}_{\mathfrak{M}}$ be the following mapping:
$$\mathcal{W}_{\mathfrak{M}}(T)=\begin{cases}
                     0 & \mbox{if } \mathfrak{M}\vDash T,  \\
                     1 & \mbox{if } \mathfrak{M}\nvDash T;
                   \end{cases}$$
where $T$ is a  first-order theory.
\hfill \ding{71}
\end{definition}

\begin{theorem}[$\mathcal{W}_V,\mathcal{W}_{\mathfrak{M}}$ satisfy {\sf HP}]\label{thm:m}
\noindent

\indent
For every evaluation $V$ and every structure $\mathfrak{M}$,   
$\mathcal{W}_V$ and $\mathcal{W}_{\mathfrak{M}}$ satisfy {\sf HP}.
\end{theorem}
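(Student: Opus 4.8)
The plan is to prove the equivalent reformulation noted right after Definition~\ref{def:hp}, namely $T\vdash\psi \Rightarrow \mathcal{W}(T)\geqslant\mathcal{W}(\psi)$, and to observe that, for both weights, this is nothing more than the soundness theorem in disguise. Since $\mathcal{W}_V$ and $\mathcal{W}_{\mathfrak{M}}$ take values only in $\{0,1\}$, the hypothesis $\mathcal{W}(\psi)>\mathcal{W}(T)$ of {\sf HP} can hold in exactly one way: $\mathcal{W}(\psi)=1$ and $\mathcal{W}(T)=0$. Hence it suffices, for each weight, to rule out the configuration in which $T$ has weight $0$, $\psi$ has weight $1$, and nonetheless $T\vdash\psi$.

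First I would treat $\mathcal{W}_V$. Unwinding Definition~\ref{def:mu}, the assumption $\mathcal{W}_V(T)=0$ says $V\vDash T$, i.e. $V\vDash\tau$ for every axiom $\tau\in T$, while $\mathcal{W}_V(\psi)=1$ says $V\nvDash\psi$ (recalling that the sentence $\psi$ is identified with the singleton theory $\{\psi\}$, for which $V\vDash\{\psi\}$ is by definition the same as $V\vDash\psi$). Assume toward a contradiction that $T\vdash\psi$. By the soundness of the propositional calculus with respect to truth-table semantics, any evaluation satisfying all premises of a derivation also satisfies its conclusion; since $V\vDash T$, this forces $V\vDash\psi$, contradicting $V\nvDash\psi$. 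Therefore $T\nvdash\psi$, which is precisely {\sf HP} for $\mathcal{W}_V$.

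The argument for $\mathcal{W}_{\mathfrak{M}}$ is verbatim the same, with the fixed structure $\mathfrak{M}$ replacing the evaluation $V$ and the soundness of first-order logic replacing propositional soundness: from $\mathfrak{M}\vDash T$ and $T\vdash\psi$ one obtains $\mathfrak{M}\vDash\psi$, contradicting $\mathcal{W}_{\mathfrak{M}}(\psi)=1$. This completes both cases.

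I expect no genuine obstacle here; the only points requiring care are bookkeeping ones. One must confirm that the deductive relation $\vdash$ in force is sound for the chosen semantics (truth tables in the propositional case, Tarskian satisfaction in the first-order case), and that the identification of a sentence with its singleton theory is compatible with the definition of $\vDash$ on theories. Both are immediate. The real content of the theorem is thus simply that a single fixed model induces a two-valued weight that can only \emph{decrease} (never increase) along the provability relation, so that soundness, repackaged as a weight, yields {\sf HP} for free.
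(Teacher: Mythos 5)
Your proof is correct and follows essentially the same route as the paper's: both reduce the hypothesis $\mathcal{W}(\psi)>\mathcal{W}(T)$ to the unique configuration $\mathcal{W}(T)=0$, $\mathcal{W}(\psi)=1$, and then invoke soundness of $\vdash$ with respect to the fixed evaluation or structure to conclude $T\nvdash\psi$. The only difference is that you make the appeal to the soundness theorem explicit where the paper leaves it implicit in its ``therefore''.
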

\begin{proof}
\noindent

\indent
If $\mathcal{W}_{\mathfrak{M}}(U)>\mathcal{W}_{\mathfrak{M}}(T)$, then $\mathcal{W}_{\mathfrak{M}}(U)=1$ and
$\mathcal{W}_{\mathfrak{M}}(T)=0$, so $\mathfrak{M}\vDash T$ but $\mathfrak{M}\nvDash U$, therefore    $T\nvdash U$.
\end{proof}

Let us recall that no structure can satisfy an {\em inconsistent} theory $T$ (for which we write  $T\vdash\bot$); and every structure  satisfies a {\em tautological} theory $T$ (written as $\top\vdash T$). If we replace ``$\mathfrak{M}\vDash$'' with ``$\top\vdash$'' in Definition~\ref{def:mu}, then we get the following weighing $\mathcal{W}_{\top}$; the other weighing $\mathcal{W}^{\bot}$ is its dual.

\begin{definition}[$\mathcal{W}_{\top},
\mathcal{W}^{\bot}$]\label{def:botop}
\noindent

\indent
Let $\mathcal{W}_{\top}$ and $\mathcal{W}^{\bot}$ be the following mappings,
$$\mathcal{W}_{\top}(T)=\begin{cases}
                          0 & \mbox{if } \top\vdash T, \\
                          1 & \mbox{if } \top\nvdash T;
                        \end{cases}\quad\textrm{ and }\quad
\mathcal{W}^{\bot}(T)=\begin{cases}
                          0 & \mbox{if } T\nvdash\bot, \\
                          1 & \mbox{if } T\vdash\bot;
                           \end{cases}$$
for a theory $T$.
\hfill \ding{71}
\end{definition}
The weighing $\mathcal{W}^{\bot}$  is the so-called ``drastic inconsistency measure'', introduced  by {Hunter} and {Konieczny} in 2008; see e.g.\ \cite[Def.~5, p.~1011]{HK10}.
It is easy to see that both the mappings  $\mathcal{W}_{\top}$ and $\mathcal{W}^{\bot}$ satisfy {\sf HP} (see Theorem~\ref{thm:wv} below). In fact,  $\top$ and $\bot$ play no special roles  in $\mathcal{W}_{\top}$ or $\mathcal{W}^{\bot}$.

\begin{definition}[$\mathcal{W}_{\mathbb{V}},
\mathcal{W}^{\mathbb{V}}$]\label{def:wv}
\noindent

\indent
For a fixed theory $\mathbb{V}$, let $\mathcal{W}_{\mathbb{V}}$ and $\mathcal{W}^{\mathbb{V}}$ be the following mappings,
$$\mathcal{W}_{\mathbb{V}}(T)=\begin{cases}
                          0 & \mbox{if } \mathbb{V}\vdash T, \\
                          1 & \mbox{if } \mathbb{V}\nvdash T;
                        \end{cases}\quad\textrm{ and }\quad
\mathcal{W}^{\mathbb{V}}(T)=\begin{cases}
                          0 & \mbox{if } T\nvdash\mathbb{V}, \\
                          1 & \mbox{if } T\vdash\mathbb{V};
                           \end{cases}$$
where $T$ is a theory.
\hfill \ding{71}
\end{definition}
Below, we will show that both $\mathcal{W}_{\mathbb{V}}$ and $\mathcal{W}^{\mathbb{V}}$ satisfy {\sf HP}. Let us note that

(a) If $\mathbb{V}$ is tautological, then $\mathcal{W}_{\mathbb{V}}$  is $\mathcal{W}_{\top}$ in Definition~\ref{def:botop},  and $\mathcal{W}^{\mathbb{V}}$ is the constant weighing $1$; and

(b) If $\mathbb{V}$ is inconsistent, then $\mathcal{W}_{\mathbb{V}}$ is the constant weighing $0$, and $\mathcal{W}^{\mathbb{V}}$ is $\mathcal{W}^{\bot}$ in Definition~\ref{def:botop}.

\begin{theorem}[$\mathcal{W}_{\mathbb{V}},\mathcal{W}^{\mathbb{V}}$ satisfy {\sf HP}]\label{thm:wv}
\noindent

\indent
For a fixed theory $\mathbb{V}$,  both $\mathcal{W}_{\mathbb{V}}$ and $\mathcal{W}^{\mathbb{V}}$ satisfy {\sf HP}.
\end{theorem}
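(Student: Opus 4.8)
The plan is to exploit the fact that both $\mathcal{W}_{\mathbb{V}}$ and $\mathcal{W}^{\mathbb{V}}$ are two-valued, so that the hypothesis $\mathcal{W}(U)>\mathcal{W}(T)$ of {\sf HP} can be met only when the larger weight equals $1$ and the smaller equals $0$. Following the pattern of the proof of Theorem~\ref{thm:m}, I would treat the two mappings separately, in each case read off the provability facts encoded by the weight values, and then close the argument with the transitivity (cut) of the provability relation $\vdash$ between theories.

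For $\mathcal{W}_{\mathbb{V}}$, I would suppose $\mathcal{W}_{\mathbb{V}}(U)>\mathcal{W}_{\mathbb{V}}(T)$. Then necessarily $\mathcal{W}_{\mathbb{V}}(U)=1$ and $\mathcal{W}_{\mathbb{V}}(T)=0$, that is, $\mathbb{V}\nvdash U$ while $\mathbb{V}\vdash T$. Assuming towards a contradiction that $T\vdash U$, transitivity of $\vdash$ applied to $\mathbb{V}\vdash T$ and $T\vdash U$ would yield $\mathbb{V}\vdash U$, contradicting $\mathbb{V}\nvdash U$; hence $T\nvdash U$, which is exactly {\sf HP}. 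For $\mathcal{W}^{\mathbb{V}}$ the roles are dualized: from $\mathcal{W}^{\mathbb{V}}(U)>\mathcal{W}^{\mathbb{V}}(T)$ I get $U\vdash\mathbb{V}$ and $T\nvdash\mathbb{V}$, and assuming $T\vdash U$, transitivity applied to $T\vdash U$ and $U\vdash\mathbb{V}$ gives $T\vdash\mathbb{V}$, contradicting $T\nvdash\mathbb{V}$; hence again $T\nvdash U$.

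The only point that needs care — and where I expect the main, though modest, obstacle to lie — is justifying transitivity at the level of theories rather than single sentences. Here $\mathbb{V}\vdash T$ abbreviates ``$\mathbb{V}\vdash\tau$ for every $\tau\in T$'' and $T\vdash U$ abbreviates ``$T\vdash\psi$ for every $\psi\in U$'', so one must check that if $\mathbb{V}$ proves each axiom of $T$ and $T$ proves each $\psi\in U$, then $\mathbb{V}$ proves each such $\psi$. This is an instance of the cut rule: any derivation of $\psi$ from $T$ uses only finitely many axioms of $T$, each of which $\mathbb{V}$ proves, so the two derivations can be spliced. This holds for the usual finitary proof systems underlying the paper, and once it is recorded, both cases go through verbatim.

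Finally, since the argument never used any special feature of $\mathbb{V}$, I would remark that the claims (a) and (b) stated just before the theorem recover $\mathcal{W}_{\top}$ and $\mathcal{W}^{\bot}$ of Definition~\ref{def:botop} as the special cases where $\mathbb{V}$ is tautological or inconsistent, so that the promised fact that $\mathcal{W}_{\top}$ and $\mathcal{W}^{\bot}$ satisfy {\sf HP} falls out at once.
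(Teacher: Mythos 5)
Your proposal is correct and follows essentially the same route as the paper: read off the provability facts from the two weight values and conclude $T\nvdash U$ by the transitivity (cut) of $\vdash$, which the paper itself highlights as the main tool right after this theorem. Your extra paragraph justifying cut at the level of theories (splicing finitely many derivations) only makes explicit what the paper takes for granted.
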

\begin{proof}
\noindent

\indent
If $\mathcal{W}^{\mathbb{V}}(U)>
\mathcal{W}^{\mathbb{V}}(T)$, then $\mathcal{W}^{\mathbb{V}}(U)=1$ and
$\mathcal{W}^{\mathbb{V}}(T)=0$, so $U\vdash\mathbb{V}$ but $T\nvdash\mathbb{V}$, therefore  $T\nvdash U$. The case of $\mathcal{W}_{\mathbb{V}}$ is very similar to the proof of Theorem~\ref{thm:m} (just replace   ``$\mathfrak{M}\vDash$'' with ``$\mathbb{V}\vdash$'').
\end{proof}

One main tool in the proofs was the transitivity of the deduction relation: if $S\vdash T\vdash U$, then $S\vdash U$.
There are some other {\sf HP}-satisfying mappings that have more than two values. Let us skip the proof of the following proposition, which could be an interesting exercise in elementary logic.

\begin{proposition}[Some {\sf HP}-satisfying weightings with  more than two  values]\label{prop:3,5}
\noindent

\indent
The following mapping, for a theory $T$, satisfies {\sf HP}.
$$T\mapsto\begin{cases}
            0 & \mbox{if } \top\vdash T \textrm{ (i.e., if $T$ is tautological)}; \\
            1 & \mbox{if } \top\nvdash T\nvdash\bot \textrm{ (i.e., if $T$ is non-tautological and consistent)}; \\
            2 & \mbox{if } T\vdash\bot \textrm{ (i.e., if $T$ is inconsistent)}.
          \end{cases}$$
Fix a consistent and non-tautological theory $\mathbb{V}$ (that is,  $\top\nvdash\mathbb{V}\nvdash\bot$). The following mappings, for a theory $T$, satisfy {\sf HP}.
$$T\mapsto\begin{cases}
            0 & \mbox{if } T\nvdash\mathbb{V}\vdash T;  \\
            1 & \mbox{if } T\vdash\mathbb{V}\vdash T \mbox{ or } T\nvdash\mathbb{V}\nvdash T; \\
            2 & \mbox{if } T\vdash\mathbb{V}\nvdash T \mbox{ and } T\nvdash\bot; \\
            3 & \mbox{if } T\vdash\bot.
          \end{cases}
\,  \textrm{ and }  \,
T\mapsto\begin{cases}
            0 & \mbox{if } \top\vdash T;  \\
            1 & \mbox{if } \top\nvdash T  \mbox{ and } T\nvdash\mathbb{V}\vdash T;\\
            2 & \mbox{if } T\vdash\mathbb{V}\vdash T \mbox{ or } T\nvdash\mathbb{V}\nvdash T; \\
            3 & \mbox{if } T\vdash\mathbb{V}\nvdash T \mbox{ and } T\nvdash\bot; \\
            4 & \mbox{if } T\vdash\bot.
          \end{cases}$$
\hfill \ding{113}
\end{proposition}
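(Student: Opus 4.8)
The plan is to work with the contrapositive form of {\sf HP} recorded right after Definition~\ref{def:hp}. Since each sentence is identified with its singleton theory, a real-valued mapping $\mathcal{W}$ on theories satisfies {\sf HP} exactly when
$$T\vdash U \;\Longrightarrow\; \mathcal{W}(T)\geqslant\mathcal{W}(U)$$
for all theories $T,U$; that is, passing to a consequence never increases the weight. So for each of the three displayed mappings I would simply verify that the weight is monotone along the deduction relation.

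The engine is again the transitivity of $\vdash$, the tool emphasised after Theorem~\ref{thm:wv}. Assuming $T\vdash U$, I would record four one-line propagation facts, each immediate from $S\vdash T\vdash U\Rightarrow S\vdash U$: (i) if $U\vdash\mathbb{V}$ then $T\vdash\mathbb{V}$; (ii) if $\mathbb{V}\vdash T$ then $\mathbb{V}\vdash U$; (iii) if $U\vdash\bot$ then $T\vdash\bot$; and (iv) if $\top\vdash T$ then $\top\vdash U$. In words: provability of $\mathbb{V}$ and inconsistency propagate \emph{down} the relation, while $\mathbb{V}$-provability and tautologicity propagate \emph{up}.

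For the bookkeeping I would attach to each theory $X$ the two bits $a_X=1$ iff $X\vdash\mathbb{V}$ and $b_X=1$ iff $\mathbb{V}\vdash X$ (and $0$ otherwise). The key observation is that each mapping is an affine function of these bits away from its extremes: the first ignores $\mathbb{V}$ and takes $0,1,2$ according as $X$ is tautological, consistent non-tautological, or inconsistent; the second equals $a_X-b_X+1$ on every consistent theory (tautological $X$ automatically giving $0$) and $3$ on inconsistent $X$; and the third equals $a_X-b_X+2$ on consistent non-tautological theories, with the extreme values $0$ on tautological $X$ and $4$ on inconsistent $X$. Facts (i) and (ii) give $a_T\geqslant a_U$ and $b_T\leqslant b_U$ whenever $T\vdash U$, hence $a_T-b_X\ \text{—}$ more precisely $a_T-b_T\geqslant a_U-b_U$, so on the affine region the weight can only rise when we pass to a consequence, which is precisely {\sf HP}. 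The extremes are then disposed of by (iii) and (iv): if $U$ is inconsistent so is $T$ (both get the top weight, with equality), and if $T$ is tautological so is $U$, which is exactly what rules out the forbidden configuration ``$T$ light, $U$ heavy'' in the third mapping. A short case split on whether each of $T,U$ is tautological, generic, or inconsistent then closes all three.

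The step I expect to be the actual work is not the monotonicity inequality but the \emph{well-definedness} of the five- and four-way case splits, i.e.\ checking that the listed clauses are mutually exclusive and exhaustive; this is where the standing hypotheses $\top\nvdash\mathbb{V}$ and $\mathbb{V}\nvdash\bot$ enter. Concretely, every inconsistent $X$ has $a_X=1$ by \emph{ex falso} and, since $\mathbb{V}$ is consistent, also $b_X=0$; dually every tautological $X$ has $a_X=0$ (as $\mathbb{V}$ is non-tautological) and $b_X=1$. These two computations are exactly what keeps the ``inconsistent'' row $(a_X,\,\neg b_X)$ and the ``tautological'' row $(\neg a_X,\,b_X)$ from silently colliding with the generic rows, and what makes the side-condition ``$T\nvdash\bot$'' in the penultimate clauses do its job. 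Once the partitions are confirmed, the {\sf HP} verification reduces to the routine monotonicity check above.
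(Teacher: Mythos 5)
Your proof is correct. The paper deliberately omits a proof of this proposition (``Let us skip the proof \dots, which could be an interesting exercise in elementary logic''), hinting only that transitivity of $\vdash$ is the main tool; your argument --- the contrapositive form $T\vdash U\Rightarrow\mathcal{W}(T)\geqslant\mathcal{W}(U)$, the four propagation facts, the affine bookkeeping $a_X-b_X$ on the generic region, and the observation that inconsistency forces $(a_X,b_X)=(1,0)$ while tautologicity forces $(0,1)$ --- is a complete and correct execution of exactly that intended exercise, including the well-definedness of the case splits, which is indeed where the hypotheses $\top\nvdash\mathbb{V}\nvdash\bot$ are used.
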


\section{The Equivalence Principle}
The converse of {\sf HP}, that is,
$${\sf HP}^{-1}\!: \quad  T\nvdash U\Rightarrow
\mathcal{W}(U)\!>\!\mathcal{W}(T),  \textrm{ for  theories }  T \textrm{ and } U,$$ cannot hold for real-valued weightings (see also \cite[pp.~184 \& 198]{Sjorgen08}). The reason is that, firstly,  ${\sf HP}^{-1}$ is equivalent to
$\mathcal{W}(U)\!\leqslant\!\mathcal{W}(T)
\Rightarrow T\vdash U,$
and, secondly, there are {\em incomparable} theories (neither of which can prove the other).
In fact, every non-provable and non-refutable sentence is incomparable to its negation; take, for example, any atom in propositional logic or $\forall x\forall y(x\!=\!y)$ in predicate logic with equality. For
incomparable theories $T$ and $T'$ and a real-valued weighing $\mathcal{W}$,  either $\mathcal{W}(T)\!\leqslant\!\mathcal{W}(T')$   or $\mathcal{W}(T')\!\leqslant\!\mathcal{W}(T)$ holds, but neither $T'\vdash T$ holds nor $T\vdash T'$.
So, ${\sf HP}^{-1}$ is out of the question as long as our weighing mappings are 
real-valued.

Let us now consider a couple of non-real-valued mappings that satisfy both {\sf HP} and ${\sf HP}^{-1}$. For the first example,   consider the  deductive closure $T^{\vdash}$ of a theory $T$, which consists of all the $T$-provable sentences.
Now, for all theories $T$ and $U$, we have $T\vdash U\!\iff\!U\subseteq T\!\iff\!T^{\vdash}\supseteq U^{\vdash}.$
Thus, deductively closed sets can {\em weigh} theories, and they satisfy ${\sf HP + HP}^{-1}$ with the inclusion order ($\supseteq$), which is transitive but not linear.
Our second example will give rise to a real-valued weighing.
\begin{definition}[$\langle \boldsymbol\psi_n\rangle_{n>0}$]\label{def:sn}
\noindent

\indent
Fix $\boldsymbol\psi_1, \boldsymbol\psi_2, \boldsymbol\psi_3, \cdots$ to be a list of all the sentences \textup{(}in a fixed countable language and computing framework\textup{)}. The list can be taken to be  effective in the sense that for a given $n>0$ it is possible to find, in a computable way, the sentence $\boldsymbol\psi_n$.
\hfill \ding{71}
\end{definition}
We consider the infinite binary $\{0,1\}$-sequences for our second example.

\begin{definition}[$\sigma,\sqsubseteq$]\label{def:sigma}
\noindent

\indent
For a theory $T$, let $\sigma(T)=\langle\mathcal{W}^{\{\boldsymbol\psi_n\}}(T)
\rangle_{n>0}$ \textup{(}see Definition~\ref{def:wv}\textup{)}.
For   binary sequences $\varsigma=\langle\varsigma_n\rangle_{n>0}$ and $\tau=\langle\tau_n\rangle_{n>0}$, let $\varsigma\sqsubseteq\tau$ mean that $\varsigma_n\leqslant\tau_n$ holds for every $n>0$.
\hfill \ding{71}
\end{definition}

Let us note that the binary relation $\sqsubseteq$ is transitive but non-linear (for example, the sequence $\langle 0,1,1,1,\cdots\rangle$ is $\sqsubseteq$-incomparable with $\langle 1,0,1,1,\cdots\rangle$). However,  $\sigma$  satisfies {\sf HP} and ${\sf HP}^{-1}$ with respect to  $\sqsupseteq$.

\begin{proposition}[${\sf HP\!+\!HP}^{-1}$ for $\sigma$  with $\sqsupseteq$]\label{prop:sigma}
\noindent

\indent
For all  theories $T$ and $U$, we have
$T\vdash U \!\iff\!  \sigma(T)\sqsupseteq \sigma(U).$
\end{proposition}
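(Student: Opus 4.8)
The plan is to observe that, by Definition~\ref{def:wv} with $\mathbb{V}=\{\boldsymbol\psi_n\}$, the $n$-th coordinate of $\sigma(T)$ is simply $1$ when $T\vdash\boldsymbol\psi_n$ and $0$ otherwise; that is, $\sigma(T)$ is nothing but the characteristic sequence of the set of $T$-provable sentences, read off along the fixed enumeration $\langle\boldsymbol\psi_n\rangle_{n>0}$ of Definition~\ref{def:sn}. Unwinding the relation $\sqsupseteq$ (the reverse of $\sqsubseteq$ from Definition~\ref{def:sigma}), the condition $\sigma(T)\sqsupseteq\sigma(U)$ says exactly that $\sigma(U)_n\leqslant\sigma(T)_n$ for every $n>0$, i.e.\ that whenever $U\vdash\boldsymbol\psi_n$ one also has $T\vdash\boldsymbol\psi_n$. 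So the whole equivalence reduces to the claim that $T$ proves every axiom of $U$ if and only if $T$ proves every $U$-theorem.

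For the forward implication I would assume $T\vdash U$ and take any $n>0$ with $\sigma(U)_n=1$, i.e.\ $U\vdash\boldsymbol\psi_n$. Since $T$ proves every element of $U$, the transitivity of the deduction relation (the same tool used in Theorems~\ref{thm:m} and~\ref{thm:wv}) gives $T\vdash\boldsymbol\psi_n$, hence $\sigma(T)_n=1\geqslant\sigma(U)_n$; as the coordinates with $\sigma(U)_n=0$ impose no constraint, this yields $\sigma(T)\sqsupseteq\sigma(U)$.

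For the converse I would assume $\sigma(T)\sqsupseteq\sigma(U)$ and show $T\vdash U$, i.e.\ that $T$ proves each axiom $\chi$ of $U$. Here the crucial point is that $\langle\boldsymbol\psi_n\rangle_{n>0}$ lists \emph{all} sentences, so $\chi=\boldsymbol\psi_m$ for some $m$. Because $\boldsymbol\psi_m\in U$ we trivially have $U\vdash\boldsymbol\psi_m$, so $\sigma(U)_m=1$; the hypothesis then forces $\sigma(T)_m=1$, that is $T\vdash\boldsymbol\psi_m=\chi$. Since $\chi$ was an arbitrary axiom of $U$, this gives $T\vdash U$.

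There is no genuinely hard step here: the argument is a direct unwinding of Definition~\ref{def:wv} together with the transitivity of $\vdash$. The only point that must not be glossed over is the exhaustiveness of the enumeration in Definition~\ref{def:sn}; it is precisely the surjectivity of $n\mapsto\boldsymbol\psi_n$ onto the sentences that lets the single-coordinate information recover provability of an arbitrary axiom of $U$ in the converse direction. (The effectiveness of the list plays no role in this proposition and matters only for the later computability considerations.)
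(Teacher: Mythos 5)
Your proof is correct and follows essentially the same route as the paper's: the forward direction is the identical transitivity argument, and your direct proof of the converse (every axiom of $U$ appears as some $\boldsymbol\psi_m$, forcing $\sigma(T)_m=1$) is just the contrapositive of the paper's argument, which picks a $T$-unprovable axiom of $U$ as the witnessing $\boldsymbol\psi_m$. Your explicit remark that the exhaustiveness of the enumeration $\langle\boldsymbol\psi_n\rangle_{n>0}$ is the load-bearing point is accurate and matches the paper's implicit use of it.
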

\begin{proof}
\noindent

\indent
(1) If $T\vdash U$, then for every sentence $\boldsymbol\psi_n$,  $U\vdash\boldsymbol\psi_n$ implies $T\vdash\boldsymbol\psi_n$; thus,   $\mathcal{W}^{\{\boldsymbol\psi_n\}}(T)\geqslant \mathcal{W}^{\{\boldsymbol\psi_n\}}(U)$ for every $n>0$; therefore, $\sigma(T)\sqsupseteq \sigma(U)$. (2) If $T\nvdash U$, then for some sentence $\boldsymbol\psi_m$, we have $U\vdash\boldsymbol\psi_m$ but $T\nvdash\boldsymbol\psi_m$ (one can take $\boldsymbol\psi_m$ to be one of the $T$-unprovable axioms of $U$); thus, by $\mathcal{W}^{\{\boldsymbol\psi_m\}}(T)=0$ and
$\mathcal{W}^{\{\boldsymbol\psi_m\}}(U)=1$, we obtain  $\sigma(T)\not\sqsupseteq \sigma(U)$.
\end{proof}

Before going back to real-valued weights, let us notice another property of the binary sequence  $\sigma(T)$, in Definition~\ref{def:sigma}, which will be needed later.

\begin{lemma}[When $\sigma(T)$ is eventually constant]\label{lem}
\noindent

\indent
For a theory $T$,
the sequence  $\sigma(T)$ is  eventually constant if and only if it is all  $1$ if and only if $T$ is inconsistent.
\end{lemma}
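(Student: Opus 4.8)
The plan is to prove the two biconditionals as a single cycle of implications: $T$ is inconsistent $\Rightarrow$ $\sigma(T)$ is all $1$ $\Rightarrow$ $\sigma(T)$ is eventually constant $\Rightarrow$ $T$ is inconsistent. The first point to record is the meaning of the sequence: by Definition~\ref{def:wv}, the $n$-th entry $\mathcal{W}^{\{\boldsymbol\psi_n\}}(T)$ of $\sigma(T)$ equals $1$ precisely when $T\vdash\boldsymbol\psi_n$ and equals $0$ otherwise, so $\sigma(T)$ is exactly the characteristic sequence recording which members of the enumeration $\langle\boldsymbol\psi_n\rangle_{n>0}$ are theorems of $T$. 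The two forward arrows are then immediate: if $T\vdash\bot$ then $T\vdash\boldsymbol\psi_n$ for every $n$, so every entry of $\sigma(T)$ is $1$; and a constantly-$1$ sequence is in particular eventually constant.

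Everything therefore reduces to the last arrow, which I would prove contrapositively: if $T$ is consistent then $\sigma(T)$ is \emph{not} eventually constant. I would derive this from the stronger fact that, for consistent $T$, the sequence $\sigma(T)$ contains infinitely many $1$'s \emph{and} infinitely many $0$'s, which of course rules out any constant tail. The $1$'s are cheap: there are infinitely many logically valid sentences, for instance a fixed tautology $\theta$ together with its iterated conjunctions $\theta,\ \theta\wedge\theta,\ \theta\wedge\theta\wedge\theta,\ \ldots$, and each of these is provable in $T$, so $T\vdash\boldsymbol\psi_n$ for infinitely many indices $n$. For the $0$'s I would invoke consistency: were only finitely many sentences $T$-unprovable, say those in a finite set $F$, then, since the sentences $\varphi$ with $\varphi\in F$ or $\neg\varphi\in F$ number at most $2\,|F|$ while there are infinitely many sentences in all, one could choose a $\varphi$ with $\varphi\notin F$ and $\neg\varphi\notin F$; then $T\vdash\varphi$ and $T\vdash\neg\varphi$, forcing $T\vdash\bot$ and contradicting consistency. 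Hence infinitely many sentences are $T$-unprovable, giving infinitely many $0$'s.

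I expect the only genuine content, and thus the main obstacle, to sit in the two infinitude claims of the previous paragraph: that the fixed language carries infinitely many provable sentences (validities) and, for a consistent $T$, infinitely many unprovable ones. Both are elementary, the first by exhibiting an explicit infinite family of tautologies and the second by the short counting argument above; once they are in place, a sequence possessing infinitely many $0$'s and infinitely many $1$'s cannot be eventually constant, the contrapositive is established, and the cycle closes, yielding simultaneously the equivalences ``eventually constant $\iff$ all $1$ $\iff$ inconsistent''.
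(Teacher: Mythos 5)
Your proof is correct and follows essentially the same route as the paper: infinitely many tautologies in the enumeration force infinitely many $1$'s, consistency forces infinitely many $0$'s, so $\sigma(T)$ can be eventually constant only by being identically $1$, which happens exactly when $T$ is inconsistent. The only small difference is in how the infinitely many $0$'s are produced --- the paper points directly at the infinitely many contradictions occurring in the list $\langle\boldsymbol\psi_n\rangle_{n>0}$ (none of which a consistent theory proves), whereas you use a counting argument on complementary pairs $\varphi,\neg\varphi$; both are equally elementary and yield the same conclusion.
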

\begin{proof}
\noindent

\indent
Since 
$\langle \boldsymbol\psi_n\rangle_{n>0}$ contains infinitely many tautologies,
  $\sigma(T)$ cannot be eventually $0$.
Clearly, $\sigma(T)$ is all $1$ for an inconsistent theory $T$.
Conversely, if  $\sigma(T)$ is eventually $1$,   then $T$ must be inconsistent, since a consistent theory cannot derive infinitely many contradictions that exist in the list $\langle \boldsymbol\psi_n\rangle_{n>0}$.
\end{proof}

We saw that ${\sf HP}^{-1}$ does not hold for   real-valued mappings. So, let us consider a rather weak consequence of ${\sf HP}^{-1}$, whose fulfillment will save the weights from being trivial or finitely-many-valued.

\begin{definition}[Equivalence Principle, {\sf EP}]\label{def:ep}
\noindent

\indent
A real-valued mapping $\mathcal{W}$ on theories  is said to satisfy the Equivalence Principle, {\sf EP}, when for all  theories $T$ and $U$, if we have  $\mathcal{W}(T)\!=\!\mathcal{W}(U)$ then $T$ is equivalent to $U$ \textup{(}i.e., $T\vdash U$ and $U\vdash T$\textup{)}; in symbols

\qquad \qquad \qquad
\textup{(}{\sf EP}\textup{)} \quad $\mathcal{W}(T)\!=\!\mathcal{W}(U) \;\Longrightarrow\; T\equiv U$.
\hfill \ding{71}
\end{definition}

Thus, under {\sf EP}, non-equivalent theories should have different weights; this was not the case for any of the real-valued mappings that we have considered so far.
Let us notice that the converse of {\sf EP}, that is,
$${\sf EP}^{-1}\!: \  T\equiv U \Longrightarrow \mathcal{W}(T)\!=\!\mathcal{W}(U),$$ is a consequence of {\sf HP} (and as we noted above, {\sf EP} follows from ${\sf HP}^{-1}$).

\begin{remark}[Chaitin's Characteristic Constant]\label{rem0}
\noindent

\indent
The minimum natural number $c$ such that for every $w$, $T\nvdash\mathscr{K}(x)\!>\!c$, is called Chaitin's {\em characteristic constant} of the theory $T$, denoted $\complement_T$.
Its existence follows from Chaitin's proof of the incompleteness theorem for sufficiently strong consistent theories.
This constant satisfies {\sf HP}, since if $\complement_U\!>\!\complement_T$, then for some $w$ we should have $U\vdash\mathscr{K}(w)\!>\!\complement_T$, thus $T\nvdash U$. But neither it nor any integer-valued weighing can satisfy {\sf EP}, since there could be infinitely many distinct theories between two consistent theories; see  Corollary~\ref{corollary2}.
\hfill \ding{71}
\end{remark}

Finally, we now introduce a   real-valued weighing that satisfies both {\sf HP} and {\sf EP}.

\begin{definition}[$\mathscr{V}$]
\label{def:wdash}
\noindent

\indent
For a theory $T$, let $\mathscr{V}(T)=\sum_{n>0}2^{-n}
\mathcal{W}^{\{\boldsymbol\psi_n\}}(T)$.
\hfill \ding{71}
\end{definition}

Thus, $\mathscr{V}(T)=0\!\centerdot\!\sigma(T)$ in base 2 (see Definition~\ref{def:sigma}); recall from Definition~\ref{def:wv} that
$$\mathcal{W}^{\{\boldsymbol\psi_n\}}(T)=
\begin{cases}
                          0 & \mbox{if } T\nvdash\boldsymbol\psi_n, \\
                          1 & \mbox{if } T\vdash\boldsymbol\psi_n.
                           \end{cases}$$

\begin{theorem}[$\mathscr{V}$ satisfies {\sf HP+EP}]\label{thm:wdash}
\noindent

\indent
The mapping $\mathscr{V}$ satisfies both  {\sf HP} and {\sf EP} for all   theories.
\end{theorem}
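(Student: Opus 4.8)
The plan is to prove the two properties separately, leaning on Proposition~\ref{prop:sigma} and Lemma~\ref{lem}. Throughout, I identify a sentence $\psi$ with the singleton theory $\{\psi\}$, so it suffices to verify {\sf HP} and {\sf EP} for arbitrary theories $T$ and $U$. For {\sf HP}, I would argue in the contrapositive form ``$T\vdash U\Rightarrow\mathscr{V}(T)\geqslant\mathscr{V}(U)$''. By Proposition~\ref{prop:sigma}, $T\vdash U$ gives $\sigma(T)\sqsupseteq\sigma(U)$, that is, $\mathcal{W}^{\{\boldsymbol\psi_n\}}(T)\geqslant\mathcal{W}^{\{\boldsymbol\psi_n\}}(U)$ for every $n>0$. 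Multiplying each of these inequalities by the positive weight $2^{-n}$ and summing over $n$ yields $\mathscr{V}(T)\geqslant\mathscr{V}(U)$ directly from Definition~\ref{def:wdash}. This disposes of {\sf HP} with no real difficulty, since it is merely the termwise monotonicity of a convergent series of non-negative terms.

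For {\sf EP}, the goal is ``$\mathscr{V}(T)=\mathscr{V}(U)\Rightarrow T\equiv U$''. First I would reduce equivalence of theories to equality of their sequences: applying Proposition~\ref{prop:sigma} in both directions, $T\equiv U$ is the same as $\sigma(T)=\sigma(U)$. So it remains to show that $\mathscr{V}$ separates distinct sequences, i.e.\ $\sigma(T)\neq\sigma(U)\Rightarrow\mathscr{V}(T)\neq\mathscr{V}(U)$. The subtle point is that the map sending a binary sequence $\langle b_n\rangle_{n>0}$ to $\sum_{n>0}2^{-n}b_n$ is \emph{not} injective: the classical dyadic ambiguity $0\!\centerdot\!1000\cdots=0\!\centerdot\!0111\cdots$ shows that two distinct sequences can share a value precisely when, past their first disagreement, one is eventually all $0$ and the other eventually all $1$.

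The main obstacle is therefore ruling out this dyadic collision, and this is exactly where Lemma~\ref{lem} does the work. Suppose $\sigma(T)\neq\sigma(U)$ and let $k$ be the least index at which they differ, say $\sigma(T)_k=1$ and $\sigma(U)_k=0$. If $\mathscr{V}(T)=\mathscr{V}(U)$ held, then cancelling the common initial bits and comparing the tails would force $2^{-k}+\sum_{n>k}2^{-n}\sigma(T)_n=\sum_{n>k}2^{-n}\sigma(U)_n$; since the right-hand side is at most $\sum_{n>k}2^{-n}=2^{-k}$ while the left-hand side is at least $2^{-k}$, equality compels $\sigma(T)_n=0$ for all $n>k$. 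Thus $\sigma(T)$ would be eventually $0$, which Lemma~\ref{lem} forbids, as the list $\langle\boldsymbol\psi_n\rangle_{n>0}$ contains infinitely many tautologies and so $\sigma(T)$ always carries infinitely many $1$'s. This contradiction shows $\mathscr{V}(T)\neq\mathscr{V}(U)$ whenever $\sigma(T)\neq\sigma(U)$; hence equal $\mathscr{V}$-values force $\sigma(T)=\sigma(U)$ and therefore $T\equiv U$, establishing {\sf EP}. The only real care needed is the bookkeeping in the first-difference argument; everything else is immediate from the two earlier results.
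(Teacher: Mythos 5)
Your proof is correct and follows essentially the same route as the paper: {\sf HP} via the termwise monotonicity given by Proposition~\ref{prop:sigma}, and {\sf EP} by using Lemma~\ref{lem} to rule out the dyadic-expansion collision so that equal values of $\mathscr{V}$ force equal sequences $\sigma(T)=\sigma(U)$, hence $T\equiv U$. Your first-difference bookkeeping merely makes explicit the step the paper leaves implicit when it asserts that the expansions cannot be eventually $0$.
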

\begin{proof}
\noindent

\indent
Let $T$ and $U$ be two theories.
({\sf HP}): If   $T\vdash U$, then   Proposition~\ref{prop:sigma} implies
$\mathscr{V}(T)
\geqslant\mathscr{V}(U)$.
({\sf EP}):  If  $\mathscr{V}(T)=\mathscr{V}(U)$, then since by Lemma~\ref{lem} neither $\mathscr{V}(T)$ nor $\mathscr{V}(U)$ can be eventually $0$,
  $\mathcal{W}^{\{\boldsymbol\psi_n\}}(T)=
\mathcal{W}^{\{\boldsymbol\psi_n\}}(U)$ holds for every $n>0$. Again,
Proposition~\ref{prop:sigma} implies   $T\equiv U$.
\end{proof}

\subsection{Computability and Probability}
What is the use of a weighing if it cannot be computed from (a finite specification of) the theory? It is easy to see that our mapping $\mathscr{V}$ in Definition~\ref{def:wdash} is computable when the underlying logic is decidable (like propositional logic or monadic first-order logic) and our theories are finite.
So, over a decidable logic, we do have some computable weightings that satisfy both {\sf HP} and {\sf EP} for finite theories.
But the story changes dramatically when the underlying logic is not decidable.

\begin{theorem}[Undecidability implies incomputability]\label{thm:hpep}
\noindent

\indent
Over an undecidable classical logic, no weighing can be computable if it satisfies both {\sf HP} and {\sf EP}.
\end{theorem}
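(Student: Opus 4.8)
The plan is to reduce the (undecidable) provability relation of the logic to equality of weights, and then to exploit the asymmetry between the semidecidability of provability and the semidecidability of strict inequality between computable reals. Throughout, let $\mathcal{W}$ be a weighing satisfying both {\sf HP} and {\sf EP}.

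First I would establish the central reduction: for every finite theory $T$ and every sentence $\psi$,
$$ T\vdash\psi \;\Longleftrightarrow\; \mathcal{W}(T)=\mathcal{W}(T\cup\{\psi\}). $$
The key observation is that $T\subseteq T\cup\{\psi\}$, so by monotonicity $T\cup\{\psi\}\vdash T$ holds unconditionally; hence $T\equiv T\cup\{\psi\}$ amounts to $T\vdash T\cup\{\psi\}$, which in turn amounts to $T\vdash\psi$. Now if $T\vdash\psi$, then $T\equiv T\cup\{\psi\}$, and ${\sf EP}^{-1}$ (which, as noted in the excerpt, follows from {\sf HP}) gives $\mathcal{W}(T)=\mathcal{W}(T\cup\{\psi\})$; conversely, if $\mathcal{W}(T)=\mathcal{W}(T\cup\{\psi\})$, then {\sf EP} gives $T\equiv T\cup\{\psi\}$, hence $T\vdash\psi$. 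Moreover, applying {\sf HP} to the unconditional $T\cup\{\psi\}\vdash T$ yields $\mathcal{W}(T\cup\{\psi\})\geqslant\mathcal{W}(T)$ in all cases, so the displayed equality can fail only upward; that is, $T\nvdash\psi$ is equivalent to the strict inequality $\mathcal{W}(T\cup\{\psi\})>\mathcal{W}(T)$.

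Next I would turn this into a decision procedure, assuming for contradiction that $\mathcal{W}$ is computable on finite theories. On input $(T,\psi)$ I would dovetail two searches. The first searches for a derivation of $\psi$ from $T$; since classical logic has a computable proof relation, provability from a finite theory is computably enumerable, so this search halts precisely when $T\vdash\psi$. The second computes increasingly sharp rational approximations of $\mathcal{W}(T)$ and $\mathcal{W}(T\cup\{\psi\})$ and halts as soon as a lower bound for $\mathcal{W}(T\cup\{\psi\})$ strictly exceeds an upper bound for $\mathcal{W}(T)$; since the two weights are either equal or satisfy $\mathcal{W}(T\cup\{\psi\})>\mathcal{W}(T)$, this search halts precisely when $T\nvdash\psi$. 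By the reduction of the previous paragraph, exactly one of the two conditions holds, so exactly one of the two searches halts, and the dovetailed procedure decides $T\vdash\psi$. This contradicts the undecidability of the logic, which is the desired conclusion.

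The main obstacle is conceptual rather than computational: one must resist the naive route of ``compute both weights and test their equality,'' because equality of computable real numbers is not in general decidable. The entire force of the argument is that {\sf HP} fixes the \emph{direction} of any discrepancy (the extended theory is never lighter), so that the failure of equality is always witnessed by a \emph{strict} inequality, and strict inequality between computable reals \emph{is} semidecidable. Pairing this one-sided semidecision with the one-sided semidecision of provability is exactly what upgrades to full decidability, producing the contradiction. A secondary point to pin down is the meaning of ``computable weighing''; since the procedure is only ever fed the finite theories $T$ and $T\cup\{\psi\}$, the relevant notion is computability of $\mathcal{W}$ on finitely specified theories, in agreement with the usage of the preceding subsection.
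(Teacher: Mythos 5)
Your proof is correct and rests on the same core idea as the paper's --- use {\sf HP} and {\sf EP} to turn provability into equality of weights, then invoke computability of $\mathcal{W}$ to decide the logic --- but it differs in two respects worth recording. First, the paper's reduction compares $\{\neg\psi\}$ with the fixed inconsistent theory $\{\bot\}$, via the classical equivalence $\vdash\psi \iff \{\neg\psi\}\equiv\{\bot\}$, and so decides validity; you compare $T$ with $T\cup\{\psi\}$, which uses only monotonicity of $\vdash$ (no negation) and decides $T\vdash\psi$ for arbitrary finite $T$ directly. Second, and more substantively, the paper's final step simply asserts that decidability of the logic follows ``by the computability of $\mathcal{W}$,'' which as written requires deciding whether two computable reals are equal --- not possible in general, and the relevant notion of computability here really is approximation-based, since the paper's own witness $\mathscr{V}$ is an infinite series. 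Your observation that {\sf HP} makes any discrepancy one-sided, so that $T\nvdash\psi$ is witnessed by a \emph{strict} inequality $\mathcal{W}(T\cup\{\psi\})>\mathcal{W}(T)$, which is semidecidable for computable reals, and that dovetailing this with the semidecidable search for a proof yields a total decision procedure, is precisely the repair that step needs (the paper's version admits the same fix, since {\sf HP} likewise gives $\mathcal{W}(\{\neg\psi\})\leqslant\mathcal{W}(\{\bot\})$ unconditionally). So your write-up is, if anything, more careful than the published proof at the one point where the latter glosses over a real issue.
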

\begin{proof}
\noindent

\indent
Assume that a computable weighing $\mathcal{W}$ satisfies {\sf HP} and {\sf EP}. Let ${\footnotesize  \mathfrak{C}}=\mathcal{W}(\{\bot\})$. Then, for every sentence $\psi$ we have

\begin{tabular}{rclcl}
  $\vdash\psi$ & $\iff$ & $\{\neg\psi\}\equiv\{\bot\}$  & & by classical logic, \\
    & $\iff$ & $\mathcal{W}(\{\neg\psi\})=\mathcal{W}(\{\bot\})$   & &   by {\sf HP+EP}, \\
   & $\iff $  & $\mathcal{W}(\{\neg\psi\})={\footnotesize  \mathfrak{C}}$ & &  by the definition of ${\footnotesize  \mathfrak{C}}$.
\end{tabular}

Thus, the logic should be decidable by the computability of $\mathcal{W}$ (and of ${\footnotesize  \mathfrak{C}},\neg$).
\end{proof}

So, we cannot have any computable weighing that satisfies both {\sf HP} and {\sf EP} over first-order logic with a binary relation  symbol, even if our theories are all finite.
One may wonder if Theorem~\ref{thm:hpep} still holds if we loosen the requirements to not require {\sf EP}.  We noted at the beginning of \S\ref{sec:wei} that each constant weighing satisfies {\sf HP}; those weightings are  obviously computable. For a decidable structure  $\mathfrak{M}$, the weight $\mathcal{W}_{\mathfrak{M}}$ in Definition~\ref{def:mu} is computable and satisfies {\sf HP} by Theorem~\ref{thm:m}. Let us recall that a structure $\mathfrak{M}$ is decidable when there exists an algorithm that decides (outputs Yes or No) if a given sentence $\psi$ holds in $\mathfrak{M}$ or not (whether $\mathfrak{M}\vDash\psi$ or $\mathfrak{M}\nvDash\psi$).  Every finite structure over a finite language is decidable, such as the ordered set $\{1,2,\dots,n\}$ over the language $\{<\}$, or the ring $\mathbb{Z}_n(=\mathbb{Z}/n\mathbb{Z})$ over the language $\{+,\times\}$.

\begin{theorem}[Computable weightings satisfying {\sf HP} but not {\sf EP}]\label{thm:noep}
\noindent

\indent
 Over a finite language, for every natural number $n$, there is some $(n+1)$-valued weighing of finite theories that is computable and satisfies {\sf HP}.
\end{theorem}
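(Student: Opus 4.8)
The plan is to realize the weighing as a sum of several of the two-valued weightings $\mathcal{W}_{\mathfrak{M}}$ from Definition~\ref{def:mu}. Fixing $n$ finite structures $\mathfrak{M}_1,\dots,\mathfrak{M}_n$ over the given finite language, I would set
$$\mathcal{W}(T)=\sum_{i=1}^{n}\mathcal{W}_{\mathfrak{M}_i}(T)=\bigl|\{i\le n : \mathfrak{M}_i\nvDash T\}\bigr|,$$
the number of the chosen structures that refute $T$. This takes values in $\{0,1,\dots,n\}$, so it is a priori $(n+1)$-valued, and it remains to check that it satisfies \textsf{HP}, that it is computable, and that it genuinely attains all $n+1$ values.

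That $\mathcal{W}$ satisfies \textsf{HP} needs no new work: each summand $\mathcal{W}_{\mathfrak{M}_i}$ satisfies \textsf{HP} by Theorem~\ref{thm:m}, and a sum of finitely many \textsf{HP}-satisfying real-valued weightings again satisfies \textsf{HP}, since $T\vdash U$ forces $\mathcal{W}_{\mathfrak{M}_i}(T)\ge\mathcal{W}_{\mathfrak{M}_i}(U)$ for every $i$ and hence $\mathcal{W}(T)\ge\mathcal{W}(U)$. Computability is equally cheap: a finite structure over a finite language is decidable, as any sentence can be evaluated by brute force over its finite domain, so each $\mathcal{W}_{\mathfrak{M}_i}$ is computable on a finite theory $T$ by testing $\mathfrak{M}_i\vDash\bigwedge T$, and a finite sum of computable functions is computable.

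The only genuine work is to guarantee that $\mathcal{W}$ attains all $n+1$ values and is therefore really $(n+1)$-valued. For this I would take the $\mathfrak{M}_i$ pairwise non-elementarily-equivalent, which is always possible over any finite language because finite structures of pairwise distinct cardinalities are separated by the sentences asserting ``there exist at least $k$ distinct elements''. Given this, for each ordered pair $i\ne j$ pick $\theta_{ij}$ with $\mathfrak{M}_i\vDash\theta_{ij}$ and $\mathfrak{M}_j\nvDash\theta_{ij}$; then $\chi_i=\bigwedge_{j\ne i}\theta_{ij}$ holds in $\mathfrak{M}_i$ and fails in every other $\mathfrak{M}_m$ (its conjunct $\theta_{im}$ is false there), so $\chi_i$ isolates $\mathfrak{M}_i$ among the chosen structures. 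For any target $k$ with $0\le k\le n$, the single-axiom theory $\{\bigvee_{i=1}^{n-k}\chi_i\}$ (reading the empty disjunction, for $k=n$, as $\bot$) is modelled by exactly $\mathfrak{M}_1,\dots,\mathfrak{M}_{n-k}$ and refuted by the remaining $k$, so $\mathcal{W}$ of it equals $k$.

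The step I expect to be the crux is this last one: confirming that the chosen structures can be separated by first-order sentences so that every subset, and hence every count $k$, is realised by a finite theory. The argument above reduces this to the standard fact that distinct finite cardinalities are first-order distinguishable, which holds even over the empty language with equality; thus the construction goes through for every finite language and every $n$, completing the proof.
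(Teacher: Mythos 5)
Your construction is essentially the paper's own: the paper likewise fixes $n$ finite, pairwise non-equinumerous structures and weighs a finite theory by the number of them that do \emph{not} satisfy it, establishing \textsf{HP} by a pigeonhole argument that is just the contrapositive of your summand-wise monotonicity observation, and computability by the decidability of finite structures. Your additional verification that every value in $\{0,\dots,n\}$ is actually attained (via the isolating sentences $\chi_i$ built from cardinality formulas) is correct and fills in a point the paper leaves implicit.
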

\begin{proof}
\noindent

\indent
Let the set $\mathcal{C}$ consist of $n$ finite, pairwise non-equinumerous structures. For every sentence $\psi$, let $\mathscr{W}(\psi)$ be the number of structures in $\mathcal{C}$ that do {\em not} satisfy $\psi$. For a finite theory $T$, let $\mathscr{W}(T)$ be $\mathscr{W}(\tau)$, where $\tau$ is the conjunction of the finitely many axioms of $T$. By the decidability of the structures in the finite set $\mathcal{C}$, the weighing $\mathscr{W}$ is computable. We show that it satisfies {\sf HP}. Suppose $\mathscr{W}(\psi)>\mathscr{W}(\varphi)$ for sentences $\varphi$ and $\psi$. Then the number of structures in $\mathcal{C}$ that do not satisfy $\psi$ is strictly greater than the number of structures in $\mathcal{C}$ that do not satisfy $\varphi$. By the pigeonhole principle, one of the structures in $\mathcal{C}$ that does not satisfy $\psi$, say $\mathfrak{M}$, should satisfy $\varphi$.  Thus, $\mathfrak{M}\nvDash\psi$ but $\mathfrak{M}\vDash\varphi$; therefore, $\varphi\nvdash\psi$.
\end{proof}

It is tempting to interpret  $\mathscr{V}(T)$ as a {\em proving measure} of the theory $T$, for the coefficient of $2^{-n}$ in the binary expansion of $\mathscr{V}(T)$ is $1$ if $T\vdash\boldsymbol\psi_n$ and is $0$ otherwise. The following definition and theorem show that one should strongly resist this temptation.

\begin{definition}[$\mathscr{V}^{\langle a,b\rangle}_{\alpha}$]\label{def:wab}
\noindent

\indent
Let $a,b$ be two real numbers such that $b>a\geqslant 0$. Let $\langle\alpha_n\rangle_{n>0}$ be a sequence of positive real numbers such that the series $\sum_{n>0}\alpha_n$ is fast converging with respect to $a/b$, in the sense that for every $n>0$ we have $\sum_{i>n}\alpha_i<\alpha_n(1-{a}/{b})$. One can  take, for example, $\alpha_n=c^{-n}$ for a real number $c$ with $c>1+\frac{b}{b-a}$. Let
$$\sigma_{n}^{\langle a,b\rangle}(T)=\begin{cases}
                      a & \mbox{if } T\nvdash\boldsymbol\psi_n, \\
                      b & \mbox{if } T\vdash\boldsymbol\psi_n;
                    \end{cases}$$
for a theory $T$. Finally, put
$\mathscr{V}^{\langle a,b\rangle}_{\alpha}(T)
=\sum_{n>0}
\alpha_n\sigma_{n}^{\langle a,b\rangle}(T)$.
\hfill \ding{71}
\end{definition}

\begin{theorem}[$\mathscr{V}^{\langle a,b\rangle}_{\alpha}$ satisfies {\sf HP+EP}]\label{thm:onwdash}
\noindent

\indent
If $a,b$, and $\langle\alpha_n\rangle_{n>0}$ are as in Definition~\ref{def:wab} above, then  the mapping $\mathscr{V}^{\langle a,b\rangle}_{\alpha}$ satisfies both {\sf HP} and {\sf EP} for all  theories.
\end{theorem}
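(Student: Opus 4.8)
The plan is to strip $\sigma^{\langle a,b\rangle}_n$ down to its indicator content and then handle {\sf HP} and {\sf EP} separately: {\sf HP} will fall out of term-by-term monotonicity, while {\sf EP} is the real content and will rest on the fast-convergence hypothesis. First I would record that the series $\sum_{n>0}\alpha_n$ converges (this is built into Definition~\ref{def:wab}), so $\mathscr{V}^{\langle a,b\rangle}_{\alpha}$ is well-defined. Then, for each theory $T$ and each $n>0$, I would write $\sigma^{\langle a,b\rangle}_n(T)=a+(b-a)\chi_n(T)$, where $\chi_n(T)\in\{0,1\}$ is $1$ exactly when $T\vdash\boldsymbol\psi_n$; consequently $\mathscr{V}^{\langle a,b\rangle}_{\alpha}(T)=a\sum_{n>0}\alpha_n+(b-a)\Delta(T)$ with $\Delta(T)=\sum_{n>0}\alpha_n\chi_n(T)$. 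Since $a\sum_{n>0}\alpha_n$ is a finite constant independent of $T$ and $b-a>0$, both principles for $\mathscr{V}^{\langle a,b\rangle}_{\alpha}$ reduce to the corresponding statements about the digit series $\Delta$, and in fact $\mathscr{V}^{\langle a,b\rangle}_{\alpha}(T)-\mathscr{V}^{\langle a,b\rangle}_{\alpha}(U)=(b-a)\bigl(\Delta(T)-\Delta(U)\bigr)$.

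For {\sf HP}, in its equivalent ``theorems weigh no more than the theory'' form, I would assume $T\vdash U$ and derive $\mathscr{V}^{\langle a,b\rangle}_{\alpha}(T)\geqslant\mathscr{V}^{\langle a,b\rangle}_{\alpha}(U)$. By transitivity of $\vdash$, every $U$-theorem is a $T$-theorem, so $U\vdash\boldsymbol\psi_n$ implies $T\vdash\boldsymbol\psi_n$, i.e.\ $\chi_n(U)\leqslant\chi_n(T)$ for every $n$. As each $\alpha_n>0$, summation yields $\Delta(U)\leqslant\Delta(T)$, hence the claimed inequality. This is exactly the argument used for $\mathscr{V}$ in Theorem~\ref{thm:wdash}, now pushed through the affine rescaling, and it is routine.

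The substance is {\sf EP}, which I would prove contrapositively: if $T\not\equiv U$ then the two weights differ. If $T\not\equiv U$, the theories prove different sets of sentences, so—since $\langle\boldsymbol\psi_n\rangle_{n>0}$ enumerates all sentences—there is a \emph{least} index $m$ with $\chi_m(T)\neq\chi_m(U)$; without loss of generality $\chi_m(T)=1$ and $\chi_m(U)=0$. All terms with $n<m$ cancel by minimality of $m$, so $\Delta(T)-\Delta(U)=\alpha_m+\sum_{n>m}\alpha_n\bigl(\chi_n(T)-\chi_n(U)\bigr)\geqslant\alpha_m-\sum_{n>m}\alpha_n$, the estimate coming from $\chi_n(T)-\chi_n(U)\geqslant-1$. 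Applying the fast-convergence hypothesis at $m$ gives $\sum_{n>m}\alpha_n<\alpha_m(1-a/b)\leqslant\alpha_m$, where the last step uses $0\leqslant a/b<1$ (which holds because $b>a\geqslant 0$). Thus $\Delta(T)-\Delta(U)>0$, and multiplying by $b-a>0$ gives $\mathscr{V}^{\langle a,b\rangle}_{\alpha}(T)>\mathscr{V}^{\langle a,b\rangle}_{\alpha}(U)$; in particular the weights differ.

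The one step deserving care—and the reason the hypothesis is stated with a strict inequality—is precisely this tail-domination: it is what makes the map $\langle\chi_n\rangle\mapsto\Delta$ injective on all $\{0,1\}$-sequences, so equal weights force agreement at every index and therefore $T\equiv U$. It is illuminating to contrast this with the base-$2$ weight $\mathscr{V}$ of Definition~\ref{def:wdash}, for which $\sum_{i>n}2^{-i}=2^{-n}$ holds with \emph{equality} and the representation is not unique (the dyadic collision $0\centerdot 0111\cdots=0\centerdot 1000\cdots$); there the {\sf EP} proof had to route through Lemma~\ref{lem} on eventually-constant sequences. Here the strict fast-convergence condition eliminates that ambiguity outright, so no analogue of Lemma~\ref{lem} is required and the argument closes directly.
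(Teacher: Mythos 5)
Your proof is correct and follows essentially the same route as the paper's: both reduce {\sf HP} to termwise monotonicity of $\sigma_n^{\langle a,b\rangle}$ under $\vdash$, and both prove {\sf EP} by locating the least index $m$ of disagreement and showing that the tail $\sum_{n>m}\alpha_n$ cannot overturn the gap created at $m$. Your affine normalization $\sigma_n^{\langle a,b\rangle}=a+(b-a)\chi_n$ even yields a slightly sharper tail estimate (you only need $\sum_{n>m}\alpha_n<\alpha_m$, whereas the paper bounds the tails by $\sigma_j(T)\leqslant b$ and $\sigma_j(U)\geqslant 0$ and so uses the full hypothesis $\sum_{n>m}\alpha_n<\alpha_m(1-a/b)$), but this is a refinement of the same argument rather than a different one.
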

\begin{proof}
\noindent

\indent
The analogue of Proposition~\ref{prop:sigma} holds for $\sigma_{n}^{\langle a,b\rangle}$:  for all   theories $T$ and $U$, 

\quad
$T\vdash U   \iff \forall n\!>\!0 \   [\sigma_{n}^{\langle a,b\rangle}(T)\geqslant
 \sigma_{n}^{\langle a,b\rangle}(U)]$.

 So, {\sf HP} holds too: if $T\vdash U$ then $\mathscr{V}^{\langle a,b\rangle}_{\alpha}(T)\geqslant
 \mathscr{V}^{\langle a,b\rangle}_{\alpha}(U)$.

 For showing {\sf EP}, suppose  $\mathscr{V}^{\langle a,b\rangle}_{\alpha}(T)=
 \mathscr{V}^{\langle a,b\rangle}_{\alpha}(U)$.
 By the $\sigma_{n}^{\langle a,b\rangle}$-analogue of Proposition~\ref{prop:sigma}, for proving  $T\equiv U$, it suffices to  show $\sigma_{n}^{\langle a,b\rangle}(T)
= \sigma_{n}^{\langle a,b\rangle}(U)$
 for each $n>0$. If this is not the case, then let $m$ be the minimum $i>0$ such that $\sigma_{i}^{\langle a,b\rangle}(T)
\neq \sigma_{i}^{\langle a,b\rangle}(U)$. Without loss of generality, we can assume that  $\sigma_{m}^{\langle a,b\rangle}(T)=a$ and $\sigma_{m}^{\langle a,b\rangle}(U)=b$. Then

\noindent
\begin{tabular}{rcll}
  $\!\mathscr{V}^{\langle a,b\rangle}_{\alpha}(T)$\!\!\! & $\!=\!$ & $\!\!\!\sum_{0<i<m}
\alpha_i\sigma_{i}^{\langle a,b\rangle}(T)\!+\!\alpha_ma\!+\!\sum_{j>m}
\alpha_j\sigma_{j}^{\langle a,b\rangle}(T)$  &  \!\!\!by Definition~\ref{def:wab}, \\
    & \!$\!=\!$\! & $\!\!\!\sum_{0<i<m}
\alpha_i\sigma_{i}^{\langle a,b\rangle}(U)\!+\!\alpha_ma\!+\!\sum_{j>m}
\alpha_j\sigma_{j}^{\langle a,b\rangle}(T)$   &    \!\!\!by the choice of $m$, \\
   & \!$\!\leqslant\!$\!  & $\!\!\!\sum_{0<i<m}
\alpha_i\sigma_{i}^{\langle a,b\rangle}(U)\!+\!\alpha_ma\!+\!\sum_{j>m}
\alpha_jb$ &   \!\!\!by $\sigma_{j}^{\langle a,b\rangle}(T)\leqslant b$, \\
    &  \!$\!<\!$\! & $\!\!\!\sum_{0<i<m}
\alpha_i\sigma_{i}^{\langle a,b\rangle}(U)\!+\!\alpha_mb$
 &   \!\!\!by the $\alpha_n$'s property,  \\
      &  \!$\!\leqslant\!$\! & $\!\!\!\sum_{0<i<m}
\alpha_i\sigma_{i}^{\langle a,b\rangle}(U)\!+\!\alpha_mb\!+\!\sum_{j>m}
\alpha_j\sigma_{j}^{\langle a,b\rangle}(U)$ &   \!\!\!by $0\leqslant\sigma_{j}^{\langle a,b\rangle}(U)$,  \\
  &  \!$\!=\!$\! & $\!\!\!\mathscr{V}^{\langle a,b\rangle}_{\alpha}(U)$ &   \!\!\!by Definition~\ref{def:wab}.  \\
\end{tabular}

This contradicts the assumption $\mathscr{V}^{\langle a,b\rangle}_{\alpha}(T)=
 \mathscr{V}^{\langle a,b\rangle}_{\alpha}(U)$ above.
\end{proof}

The mapping $\mathscr{V}^{\langle a,b\rangle}_{\alpha}$  can take values in the interval $(0,1)$, for example, when we put  $\alpha_n=c^{-n}$ for some real numbers $a,b,c$ that satisfy  $0\!\leqslant\!a\!\leqslant\!b\!-\!1\!<\!c\!-\!2$ (such as $a\!=\!2,b\!=\!4,c\!=\!7$). But there is no reason to see $\mathscr{V}^{\langle a,b\rangle}_{\alpha}$   as the probability of anything.\footnote{On the contrary, for  $4\!\leqslant\!a\!=\!b\!-\!8$ we have $1\!+\!{b}/{(b-a)}\!=\!2\!+\!{a}/{8}\!<\!a$, so for every $c$ between $2\!+\!{a}/{8}$ and $a$, such as $c\!=\!a\!-\!1$ (for example, $a\!=\!8,b\!=\!16,c\!=\!7$), we will have $\mathscr{V}^{\langle a,b\rangle}_{\alpha}(T)>1$, for every theory $T$ (with $\alpha_n=c^{-n}$ for each $n>0$). Let us also note that for every weight $\mathcal{W}$, the mapping $(1+2^{\mathcal{W}})^{-1}$ ranges over  $(0,1)$ but cannot be interpreted as a probability.}

 \subsubsection{Recursively Enumarable  Theories}\label{sec:re}

 We now consider   {\em recursively enumerable} ({\sc re}) theories; an {\sc re} set is the (possibly infinite) set of the outputs of a fixed  input-free program.
  Let $\mathbb{T}_1, \mathbb{T}_2, \mathbb{T}_3, \cdots$ be an effective list of all the {\sc re} theories (in a fixed language and computing framework).
We  notice  how arbitrary an ({\sf HP+EP})-satisfying weighing of  {\sc re} theories can be:

\begin{remark}[Arbitrary weightings that satisfy {\sf HP} and {\sf EP} for all {\sc re} theories]\label{rem}
\noindent

\indent
{\rm Let us define $\mathscr{U}(\mathbb{T}_n)$ by induction on $n>0$, in such a way that  both  {\sf HP} and {\sf EP} are satisfied. Take $\mathscr{U}(\mathbb{T}_1)$ to be an arbitrary real number. Suppose that $\{\mathscr{U}(\mathbb{T}_1),\cdots,
\mathscr{U}(\mathbb{T}_n)\}$ are defined and that  {\sf HP} and {\sf EP} hold for $\mathbb{T}_1,\cdots,\mathbb{T}_n$. We now define $\mathscr{U}(\mathbb{T}_{n+1})$. If  $\mathbb{T}_{n+1}\equiv \mathbb{T}_{m}$ for some $m\leqslant n$, then let $\mathscr{U}(\mathbb{T}_{n+1})=
\mathscr{U}(\mathbb{T}_{m})$. Now, assume that $\mathbb{T}_{n+1}$ is not equivalent to any of
$\mathbb{T}_1,\cdots,\mathbb{T}_n$. Let $\mathbb{T}_{i_1},\cdots,\mathbb{T}_{i_\Bbbk}$ be all the sub-theories of $\mathbb{T}_{n+1}$ among $\mathbb{T}_1,\cdots,\mathbb{T}_n$ if any; if there is no such sub-theory, then $\Bbbk=0$. Also, let $\mathbb{T}_{j_1},\cdots,\mathbb{T}_{j_\ell}$ be all the super-theories of $\mathbb{T}_{n+1}$ among $\mathbb{T}_1,\cdots,\mathbb{T}_n$ if any; if there is no such super-theory, then $\ell=0$. Since both {\sf HP} and {\sf EP} hold for $\mathbb{T}_1,\cdots,\mathbb{T}_n$ by the induction assumption, then for every $u\leqslant\Bbbk$ and $v\leqslant\ell$ we have $\mathscr{U}(\mathbb{T}_{i_u})<
\mathscr{U}(\mathbb{T}_{j_v})$; note that all $\mathbb{T}_{i_u}$'s are {\em strict} sub-theories of $\mathbb{T}_{n+1}$ and all $\mathbb{T}_{j_v}$'s are {\em strict} super-theories of $\mathbb{T}_{n+1}$. Let $\mathtt{I}\!\!\!\!\mathtt{I} =\max\{\mathscr{U}(\mathbb{T}_{i_1}), \cdots, \mathscr{U}(\mathbb{T}_{i_\Bbbk})\}$ and
$\mathtt{J}\!\!\!\!\mathtt{J}=\min\{\mathscr{U}(\mathbb{T}_{j_1}), \cdots, \mathscr{U}(\mathbb{T}_{j_\ell})\}$; notice that $\max\emptyset=-\infty$ and $\min\emptyset=+\infty$ by convention. Finally, take $\mathscr{U}(\mathbb{T}_{n+1})$ to be any real number between $\mathtt{I}\!\!\!\!\mathtt{I}$ and $\mathtt{J}\!\!\!\!\mathtt{J}$ (note that we have  $\mathtt{I}\!\!\!\!\mathtt{I}<
\mathtt{J}\!\!\!\!\mathtt{J}$ by what was said above;  thus, $\mathtt{I}\!\!\!\!\mathtt{I}<\mathscr{U}
(\mathbb{T}_{n+1})
<\mathtt{J}\!\!\!\!\mathtt{J}$ is quite possible).}
\hfill \ding{71}
\end{remark}

The above construction  can be  computable if our logic is decidable and our theories are all finite. If the underlying logic is not decidable or we do not want to be restricted to finite theories, then the above construction is $\emptyset^{\prime\prime}$-computable (i.e., is computable if one has access to the oracle $\emptyset^{\prime\prime}$). All we have to do  is   check $\mathbb{T}_n\vdash\mathbb{T}_m$, which amounts to the $\Pi_2$-statement $\forall \tau\!\in\!\mathbb{T}_m\exists \pi (\pi\textrm{ is a }\mathbb{T}_n\textrm{-proof of }\tau)$; here we need the effectivity of the list $\langle\mathbb{T}_1, \mathbb{T}_2, \mathbb{T}_3, \cdots\rangle$.  Let us also note that the construction of Remark~\ref{rem} highly  depends 
on how the {\sc re} theories
are ordered; cf.\ also  \cite[p.~577]{Raatikainen98}.

\section{Conclusions}\label{sec:conc}
The {Kolmogorov-Chaitin} complexity is not a good way of weighing theories or sentences since it does not  satisfy {Chaitin}'s heuristic principle  ({\sf HP}), which says that theories can prove lighter sentences only (Definition~\ref{def:hp}).  Neither  does the $\delta$-complexity, which is the {Kolmogorov}-complexity minus the length (Proposition~\ref{prop:nohp}). Due to the  existence of incomparable pairs of theories (none of which derives from the other), the converse of {\sf HP} (that all the  lighter sentences are provable from the theory) cannot hold for real-valued weightings.  {\sf HP} can be satisfied insipidly  by every constant weighing (all the theories and sentences
weigh the same); this constant weighing is trivially computable. To make  {\sf HP} more meaningful, we added the Equivalence Principle ({\sf EP}), which says that only equivalent theories
can have equal weights (Definition~\ref{def:ep}). Note that a consequence of {\sf HP} is that equivalent theories
must weigh equally.  {\sf EP}, equivalently saying that equally weighted theories
are logically equivalent,  is the converse of this statement. If the underlying logic is undecidable, then every weight 
that satisfies {\sf HP} and {\sf EP} should be uncomputable (Theorem~\ref{thm:hpep}). Here, we demonstrated  some  weightings
(Definitions~\ref{def:wdash} and~\ref{def:wab})
that satisfy {\sf HP} and {\sf EP}, and moreover, they are  ({i})~computable if the underlying logic is decidable and the considered theories are all finitely axiomatizable, and  ({ii})~uncomputable if the underlying logic is undecidable. So, regarding the satisfaction of {\sf HP} and {\sf EP}  and the computability of the weighing, this seems to be the best that can be done. Our weightings should not be regarded as the probability of any event, even though they resemble the $\Omega$ number that  is presumed to be the {\em probability of halting}.
All this probably shows that  it takes a genius' idea ({Chaitin}'s constant number) to solve a genius' problem ({Chaitin}'s heuristic principle).

A question that now comes to mind is: Are there any uses for {\sf HP}-satisfying weights of theories? In fact, constructing counter-examples is the most natural way to show unprovability (for example, matrix multiplication shows that the commutativity axiom does not follow from the axioms of group theory). In other words, the most well-known unprovability technique  has been our Definition~\ref{def:mu} (and Theorem~\ref{thm:m}) above. {Lobachevsky}'s geometry is a historic example of a model for the unprovability of the parallel postulate from the rest of the axioms of {Euclid}ean geometry. {G\"odel}'s incompleteness theorems provide monumental examples of the other kinds: the unprovability of {G\"odel}ian sentences (in the first incompleteness  theorem) and the unprovability of the consistency statement (in the second incompleteness theorem), for which {G\"odel} did not construct any model (that satisfies the theory but does not satisfy a {G\"odel}ian sentence or the consistency statement); his proof was totally syntactic. So far, we do not know if there ever was an unprovability result in the history of mathematics that used     an ({\sf HP+EP})-satisfying weighing of   theories.

\part{Halting Probability:  To Compute Or Not To Compute}
\author{Manizheh~Jalilvand, Behzad~Nikzad}
  \shorttitle{\sl On Chaitin's Halting Probability}
\section{Motivation}\label{sec:mp}
A coin is said to be {\em fair}, when the probability of getting a head ({\tt H}) by tossing it is equal to the probability of getting a tail ({\tt T}). Thus, each probability is $\frac{1}{2}$ since they should add up to $1$ by Kolmogorov's axioms of probability measure. So, the probability of getting a first tail by tossing a fair coin two times (that is, getting either {\tt TH} or {\tt TT}) is again $\frac{1}{2}$. Let us toss a fair coin one or two times and ask what the probability of getting either one head (that is, {\tt H}) or two sides that begin with a tail (that is, {\tt TH} or {\tt TT}) is. To choose the number of tosses, we provide an urn containing two similar balls, each with a unique label of {\tt 1} or {\tt 2}. We close our eyes and pick up a ball from the urn. If the ball is labeled {\tt 1}, then we toss the fair coin once; if it is labeled {\tt 2}, then we toss twice. What is the probability that we get {\tt H}, {\tt TH}, or {\tt TT}?

Whatever that is, let us consider the complement of this event. What is the probability of getting {\tt T}, {\tt HT}, or {\tt HH} by tossing a fair coin once or twice randomly? The probability of {\tt H} should be equal to the probability of {\tt T}; call their common value $q$. This is the definition of a fair coin. So should be the probabilities of {\tt TH}, {\tt TT}, {\tt HT}, and {\tt HH}; call their common value $r$. So, the probability of $E=\{\texttt{H},\texttt{TH},\texttt{TT}\}$ is $q+2r$, and so is the probability of its complement $E^\complement=\{\texttt{T},\texttt{HT},\texttt{HH}\}$. Thus, the probability of $E$ and that of $E^\complement$ should be $\frac{1}{2}$. This holds even if the probability of getting the ball with label {\tt 1} out of the urn is not equal to the probability of getting the other ball with label {\tt 2}. That is to say that the probability of $E$ is $\frac{1}{2}$, no matter the values of $q$ and $r$. Notice that the sample space is $\{\texttt{H},\texttt{T},\texttt{HH},\texttt{HT},\texttt{TH},\texttt{TT}\}$, so we should have $2q+4r=1$, thus the probability of $E$ is $\mathfrak{p}(E)=q+2r=\frac{2q+4r}{2}=\frac{1}{2}$. All we assumed here was that (1)~$\mathfrak{p}(\texttt{H})=\mathfrak{p}(\texttt{T})=q$, and (2)~$\mathfrak{p}(\texttt{HH})=\mathfrak{p}(\texttt{HT})=
\mathfrak{p}(\texttt{TH})=\mathfrak{p}(\texttt{TT})=r$.

Therefore, the probability of our event $E$ is not equal to $1$, but its Omega is so: $\varOmega_E=\frac{1}{2}+\frac{1}{4}+\frac{1}{4}=1$. In the literature, the Omega of a set $S$ of finite sequences of {\tt 0}'s and {\tt 1}'s, so-called {\em binary strings}, is defined as $\varOmega_S=\sum_{\sigma\in S}2^{-|\sigma|}$, where $|\sigma|$ denotes the length of $\sigma$ ($\varOmega_S$ is also called the {\em weight} of the set $S$; see, e.g., \cite[p.~201]{DH10}). Let us note that we have identified {\tt H} with {\tt 0} and {\tt T} with {\tt 1}.

 So,  let us write  $\tilde{E}=\{\texttt{0},\texttt{10},\texttt{11}\}$  (for which  we have $\varOmega_{\tilde{E}}=1$) and now toss our fair coin for a randomly finite number of times, and
compute the probability of getting a string from $\tilde{E}$. Our sample space is now $\{\texttt{0},\texttt{1}\}^+$, the set of all nonempty binary strings. Let $\pi_1$ be the common probability of {\tt 0} and {\tt 1} (recall the definition of a fair coin). Let $\pi_2$ be the common probabilities of {\tt 00}, {\tt 01}, {\tt 10}, and {\tt 11}. For each $n>0$, let $\pi_n$ be the probability of the binary strings with length $n$. Since there are $2^n$  such strings, then we should have $\sum_{n=1}^{\infty}2^n\pi_n=1.$
Now, the probability of $\tilde{E}$ is $\pi_1+2\pi_2$, which is much less than $1$, since
$\pi(\tilde{E})=\pi_1+2\pi_2
=(1/2)({2\pi_1+4\pi_2})\leqslant
(1/2)({\sum_{n=1}^{\infty}2^n\pi_n})=
(1/2)<1=\varOmega_{\tilde{E}}.$

\section{Preliminaries}
\subsection{Some Computational Preliminaries}
\begin{definition}[binary code,  length, {\tt ASCII} code]\label{def:sigma}
\noindent

\indent
Let $\boldsymbol\Sigma=\{\texttt{0},\texttt{1}\}^{\boldsymbol+}$ be the set of all the (nonempty) finite strings of the symbols (binary bits) $\texttt{0}$ and $\texttt{1}$.
For a string $\sigma\in\boldsymbol\Sigma$, let $|\sigma|$ denote its length.
Every program has a unique {\tt ASCII} code\footnote{{\tt ASCII}: {\em American Standard Code for Information Interchange}, The Extended 8-bit Table Based on   Windows-1252  ({\sf 1986}), available at      \url{https://www.ascii-code.com/}}   which is a binary string. This is called the binary code of the program.
\hfill\ding{71}\end{definition}

\begin{example}[binary code,  length, {\tt ASCII} code]
\noindent

\indent
The object $\texttt{01001}$ is a binary string, and its length is  $|\texttt{01001}|\!=\!5$.
The {\tt ASCII} code of the symbol @ is {\tt  01000000},  and {\tt 00100000} is the {\tt ASCII} code of the {\em blank space}, produced by the {\tt space} {\tt bar} on  the keyboard.
\hfill\ding{71}\end{example}

\begin{remark}[the empty string]
\noindent

\indent
We exclude the empty string with length $0$, which is usually included in   theory of formal languages. So, strings have all positive lengths.
\hfill\ding{71}
\end{remark}

\begin{example}[binary code of a program]
\noindent

\indent
Let us consider   the   command

\qquad {\tt BEEP}

\noindent
in, e.g., the {\tt BASIC} programming language; it produces the actual ``beep'' sound through the sound card of the computer hardware. The binary code of this command is the concatenation of the {\tt ASCII} codes of the capital letters {\tt B} (which is \texttt{01000010}), {\tt E} (which is \texttt{01000101}), {\tt E} (the same), and {\tt P} (which is \texttt{01010000}), building together  the following finite binary string:

\qquad $\texttt{01000010010001010100010101010000}$.
\hfill\ding{71}\end{example}

Next, we consider input-free programs and their halting problem, since for defining $\boldsymbol\Omega$, Chaitin gave the main idea as,  ``The idea is you generate each bit of a program by tossing a coin and
ask what is the probability that it halts.''  \cite[p.~151]{Chaitin95}.
By ``program,'' Chaitin meant an {\em input-free} program,  and by  ``bit'' of a program,  he meant   any of  the \texttt{0}'s and \texttt{1}'s in its binary ({\tt ASCII}) code.

\begin{example}[programs: input-free, halting, and non-halting]\label{ex:progs}
\noindent

\indent
Consider the following three programs over a fixed programming language, where the variables $i$ and $n$ range over the natural numbers.

\begin{center}
\begin{tabular}{||l||l||l||}
 \hline
  \textup{Program 1}
  & \textup{Program 2} & \textup{Program 3}
   \\
     \hline \hline
  \textup{\texttt{BEGIN}}
  &
  \textup{\texttt{BEGIN}} &
  \textup{\texttt{BEGIN}}
  \\
  \;\;\textup{\texttt{LET}} $n\!:=\!1$
  &
  \;\;\textup{\texttt{LET}} $n\!:=\!1$ & \;\;\textup{\texttt{INPUT}} $i$
  \\
  \;\;\;\;\textup{\texttt{WHILE}} $n\!>\!0$ \textup{\texttt{DO}}
  &
  \;\;\;\;\textup{\texttt{WHILE}} $n\!<\!9$ \textup{\texttt{DO}} &
  \;\;\;\;\textup{\texttt{WHILE}} $i\!<\!9$ \textup{\texttt{DO}}
   \\
   \;\;\;\;\;\;\textup{\texttt{begin}}
  &
  \;\;\;\;\;\;\textup{\texttt{begin}}  &
  \;\;\;\;\;\;\textup{\texttt{begin}}
  \\
  \;\;\;\;\;\;\;\; \textup{\texttt{PRINT}} $n$
  &
  \;\;\;\;\;\;\;\; \textup{\texttt{PRINT}} $n$  &
  \;\;\;\;\;\;\;\;\textup{\texttt{PRINT}} $i$
   \\
  \;\;\;\;\;\;\;\; \textup{\texttt{LET}} $n\!:=\!n\!+\!1$
  &
  \;\;\;\;\;\;\;\; \textup{\texttt{LET}} $n\!:=\!n\!+\!1$  &
  \;\;\;\;\;\;\;\;\textup{\texttt{LET}} $n\!:=\!i\!+\!1$ \  \\
   \;\;\;\;\;\;\textup{\texttt{end}}
  &
  \;\;\;\;\;\;\textup{\texttt{end}}  &
  \;\;\;\;\;\;\textup{\texttt{end}} \\
  \textsc{\texttt{END}}
  &
  \textup{\texttt{END}} &
  \textup{\texttt{END}} \\
  \hline
\end{tabular}
\end{center}

Program~1 and Program~2 do not take any input, while Program~3 takes some input (from the user) and then starts running.
Program~1 never halts (loops forever) after it starts running, but Program~2 halts eventually (when $n$ reaches $9$).
Program~3 takes the input $i$ and halts
on some values of $i$ (when $i\!\geqslant\!9$) and loops forever on others (when $i\!<\!9$).
\hfill\ding{71}\end{example}

As Chaitin took    ``programs''  for  ``input-free programs,'' we will also use these terms interchangeably; so, we disregard the programs that take some inputs and consider only input-free programs. The number
  $\boldsymbol\Omega$ was defined   by Chaitin as follows (\cite[p.~150]{Chaitin95}):
\begin{itemize}
\item[]\begin{quote}
{
What exactly is the halting probability? I've written down an expression for it: $\Omega\!=\!\sum_{p\textrm{ halts}}2^{-|p|}$. [\!...\!] If you generate a computer program at random by tossing a coin for each bit of the program, what is the chance that the program will halt? You're thinking of programs as bit strings, and you generate each bit by an independent toss of a fair coin}.
\end{quote}
\end{itemize}

Actually, for an arbitrary set $S$ of binary strings, one can define $\varOmega_S$ as follows:

\begin{definition}[$\varOmega_S$]\label{def:omega-s}
\noindent

\indent
For a set of binary strings   $S\subseteq\boldsymbol\Sigma$, let $\varOmega_S\!=\!\sum_{\sigma\in S}2^{-|\sigma|}$.
\hfill\ding{71}\end{definition}

\begin{example}[${\varOmega}_S$]\label{ex:omega-s}
\noindent

\indent
We have $\varOmega_{\{\texttt{0}\}}\!=\!\frac{1}{2}$,  $\varOmega_{\{\texttt{0},\texttt{00}\}}\!=\!\frac{3}{4}
\!=\!\varOmega_{\{\texttt{1},\texttt{00}\}}$, and
 $\varOmega_{\{\texttt{0},\texttt{1},\texttt{00}\}}
 \!=\!\frac{5}{4}$.

 We also have
 $\varOmega_{\mathcal{C}}\!=\!1$, where
 $\mathcal{C}=\{\texttt{1},\texttt{00},\texttt{010},\texttt{0110},
\texttt{01110},\texttt{011110},\dots\}$.
\hfill\ding{71}\end{example}

\begin{definition}[$\mathbb{P},\mathbb{H}$]\label{def:hp}
\noindent

\indent
Let $\mathbb{P}$ denote the set of the binary codes of all the input-free programs over a fixed programming language.
Over that fixed language, let $\mathbb{H}$ denote the set of the binary codes of all those input-free programs that halt after running (eventually stop; do not loop forever).
\hfill\ding{71}\end{definition}

\begin{definition}[the Omega number]\label{def:omega-n}
\noindent

\indent
Let  $\boldsymbol\Omega$ be the number $\varOmega_{\mathbb{H}}$ (see Definitions~\ref{def:omega-s} and~\ref{def:hp}).
 \hfill\ding{71}\end{definition}

This finishes our mathematical definition of the Omega Number. Let us notice that ``the precise numerical value of [$\boldsymbol\Omega$] depends on the choice [of the fixed] programming language'' \cite[p.~236]{CalCha10}.

\subsection{Some Mathematical Preliminaries}

\,

The number $\boldsymbol\Omega$ as 
in Definition~\ref{def:omega-n} may not lie in 
 $[0,1]$, and so it  may not be the probability of anything. Chaitin warned this in \cite[p.~150]{Chaitin95}:
\begin{itemize}
\item[]\begin{quote}
{
there's a technical detail which is very important and didn't work in the early version of algorithmic information theory. You couldn't write this: $\Omega\!=\!\sum_{p\textrm{ halts}}2^{-|p|}$. It would give infinity. The technical detail is that no extension of a valid program is a valid program. Then this sum $\Omega\!=\!\sum_{p\textrm{ halts}}2^{-|p|}$ turns out to be between zero and one. Otherwise it turns out to be infinity. It only took ten years until I got it right. The original 1960s version of algorithmic information theory is wrong. One of the reasons it's wrong is that you can't even define this number. In 1974 I redid algorithmic information theory with `self-delimiting' programs and then I discovered the halting probability, $\Omega$.}
\end{quote}
\end{itemize}

\begin{definition}[prefix-free]\label{def:prefi}
\noindent

\indent
A set of binary strings is prefix-free when no element of it is a proper prefix of another element of it.
\hfill
\ding{71}\end{definition}

\begin{example}[prefix-free]\label{ex:prefixf}
\noindent

\indent
The sets  $Z=\{\texttt{0}\}$ and $F=\{\texttt{1},\texttt{00}\}$ are both prefix-free, but their union
 $Z\cup F=\{\texttt{0},\texttt{1},\texttt{00}\}$ is not, since \texttt{0} is a prefix of
 \texttt{00}; neither is the set $\{\texttt{0},\texttt{00}\}$. The    set $\mathcal{C}=\{\texttt{1},\texttt{00},\texttt{010},\texttt{0110},
\texttt{01110},\texttt{011110},\dots\}$ is also prefix-free  (see Example~\ref{ex:omega-s}).
\hfill\ding{71}\end{example}

The Omega of every prefix-free set is 
non-greater than one. This is known as Kraft's Inequality \cite{Kraft49} and will be proved in the following (Proposition \ref{prop:kraft}).

\begin{definition}[binary expansion in base 2]\label{def:base2}
\noindent

\indent
Every natural number has a binary expansion (in base 2), which is a finite binary string that starts with $\texttt{1}$; that is to say that every $n\!\in\!\mathbb{N}$ can be written as $n\!=\!(\!\!(x_kx_{k-1}\dots x_2x_1x_0)\!\!)_{\bf 2}\!=\!
\sum_{i=0}^{k}x_i2^i$, where $x_i\!\in\!\{\texttt{0},\texttt{1}\}$, for $i\!=\!0,1,2,\dots,k\!-\!1$, and $x_k\!=\!\texttt{1}$.
Every real number $\alpha$ in the unit interval $(0,1]$ has a binary expansion (in base 2) as $\alpha\!=\!(\!\!(\texttt{0}\centerdot x_1x_2x_3\dots)\!\!)_{\bf 2}\!=\!\sum_{i=1}^{\infty}x_i2^{-i}$, where $x_i\!\in\!\{\texttt{0},\texttt{1}\}$, for $i\!=\!1,2,3,\dots$ (see \cite{Linde24}). This expansion could be finite or infinite.
\hfill\ding{71}\end{definition}

\begin{example}[binary expansion in base 2]\label{ex:base2}
\noindent

\indent
We have
$9\!=\!(\!\!(\texttt{1001})\!\!)_{\bf 2}$,
$26\!=\!(\!\!(\texttt{11010})\!\!)_{\bf 2}$,
$41\!=\!(\!\!(\texttt{101001})\!\!)_{\bf 2}$, and clearly we have
$1\!=\!(\!\!(\texttt{0}\centerdot \texttt{111}\dots)\!\!)_{\bf 2}$. Also
$\frac{9}{32}\!=\!(\!\!(\texttt{0}\centerdot \texttt{01001})\!\!)_{\bf 2}\!=\!(\!\!(\texttt{0}\centerdot \texttt{01000111}\dots)\!\!)_{\bf 2}$.
\hfill\ding{71}\end{example}

\begin{remark}[uniqueness]\label{rem:base2}
\noindent

\indent
Every natural number has a unique binary expansion,  which is a finite binary string. The infinite binary expansion of any real number in $(0,1]$ is unique.
\hfill\ding{71}\end{remark}


\begin{definition}[$\mathbb{I}_\sigma,\mathfrak{L}$]\label{def:iell}
\noindent

\indent
For a binary string $\sigma\!\in\!\boldsymbol\Sigma$, let $\mathbb{I}_\sigma$ be the interval $\big((\!\!(\texttt{0}\centerdot\sigma)\!\!)_{\bf 2} \boldsymbol,(\!\!(\texttt{0}\centerdot\sigma\texttt{111}\!
\dots)\!\!)_{\bf 2}
\big]$, which consists of all the real numbers in $(0,1]$  whose infinite binary expansions after $\texttt{0}\centerdot$ contain $\sigma$ as a prefix (cf. \cite{Linde24}).
Denote the Lebesgue measure on 
$\mathbb{R}$ by $\mathfrak{L}$.
\hfill\ding{71}\end{definition}

\begin{example}[$\mathbb{I}_\sigma,\mathfrak{L}$]\label{ex:iell}
\noindent

\indent
We have $\mathbb{I}_{\{\texttt{0}\}}\!=\!(0,\frac{1}{2}]$, $\mathbb{I}_{\{\texttt{1}\}}\!=\!(\frac{1}{2},1]$, $\mathbb{I}_{\{\texttt{00}\}}\!=\!(0,\frac{1}{4}]$, and
$\mathbb{I}_{\{\texttt{01001}\}}\!=\!(\frac{9}{32},\frac{5}{16}]$. The Lebesgue measures (lengths) of these intervals are $\mathfrak{L}(\mathbb{I}_{\{\texttt{0}\}})\!=\!\frac{1}{2}$,
$\mathfrak{L}(\mathbb{I}_{\{\texttt{1}\}})\!=\!\frac{1}{2}$,
$\mathfrak{L}(\mathbb{I}_{\{\texttt{00}\}})\!=\!\frac{1}{4}$, and finally
$\mathfrak{L}(\mathbb{I}_{\{01001\}})\!=\!\frac{1}{32}$.
\hfill\ding{71}\end{example}

\begin{lemma}[$\mathbb{I}_\sigma\!\!\setminus\!\!\{1\}
\!\!\subseteq\!\!(0,1)$, $\mathfrak{L}(\mathbb{I}_\sigma)\!\!=\!\!2^{-|\sigma|}$, $\mathbb{I}_\sigma\!\cap\!\mathbb{I}_{\sigma'}$,   $\mathfrak{L}(\bigcup_{\sigma\in S}\mathbb{I}_\sigma)\!\!=\!\!\varOmega_S$]
\label{lem:big}
\noindent

\indent
Let $\sigma,\sigma'\!\in\!\boldsymbol\Sigma$ be fixed.

(1) The interval $\mathbb{I}_\sigma$ is a half-open subinterval of $(0,1]$, i.e., $\mathbb{I}_\sigma\!\subseteq\!(0,1]$.

(2) The length of $\mathbb{I}_\sigma$ is $\frac{1}{2^{|\sigma|}}$, i.e., $\mathfrak{L}(\mathbb{I}_{\sigma})\!=\!2^{-|\sigma|}$.

(3) If $\sigma$ is not a prefix of $\sigma'$ and $\sigma'$ is not a prefix of $\sigma$, then   $\mathbb{I}_\sigma\cap \mathbb{I}_{\sigma'}\!=\!\emptyset$.

(4) If $S\subseteq\boldsymbol\Sigma$ is prefix-free, then  $\mathfrak{L}(\bigcup_{\varsigma\in S}\mathbb{I}_\varsigma)\!=\!\varOmega_S$.
\end{lemma}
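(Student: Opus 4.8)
The plan is to prove the four parts in the order stated, since each later part leans on the earlier ones. For part (1), I would observe that $\mathbb{I}_\sigma$ has the form $\big((\!\!(\texttt{0}\centerdot\sigma)\!\!)_{\bf 2},(\!\!(\texttt{0}\centerdot\sigma\texttt{111}\dots)\!\!)_{\bf 2}\big]$. The left endpoint $(\!\!(\texttt{0}\centerdot\sigma)\!\!)_{\bf 2}=\sum_{i}x_i2^{-i}$ (writing $\sigma=x_1\cdots x_{|\sigma|}$) is clearly $\geqslant 0$, and since we exclude the empty string, the right endpoint is strictly positive; the right endpoint $(\!\!(\texttt{0}\centerdot\sigma\texttt{111}\dots)\!\!)_{\bf 2}$ is at most $(\!\!(\texttt{0}\centerdot\texttt{111}\dots)\!\!)_{\bf 2}=1$ by Example~\ref{ex:base2}, so the whole half-open interval sits inside $(0,1]$.

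For part (2), I would compute the length directly as the difference of the two endpoints. The key identity is that appending the infinite tail $\texttt{111}\dots$ after $\sigma$ adds exactly $\sum_{i>|\sigma|}2^{-i}=2^{-|\sigma|}$ to the left endpoint, because that geometric tail telescopes to $2^{-|\sigma|}$. Hence $\mathfrak{L}(\mathbb{I}_\sigma)=2^{-|\sigma|}$. For part (3), the plan is to translate the prefix condition into a numerical gap. If neither of $\sigma,\sigma'$ is a prefix of the other, they first disagree at some position $k\leqslant\min(|\sigma|,|\sigma'|)$; at that position one has bit $\texttt{0}$ and the other bit $\texttt{1}$. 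Say $\sigma$ has $\texttt{0}$ and $\sigma'$ has $\texttt{1}$ at position $k$. Then every real whose expansion begins with $\sigma$ is at most $(\!\!(\texttt{0}\centerdot\sigma_{<k}\texttt{0}\texttt{111}\dots)\!\!)_{\bf 2}$, whereas every real beginning with $\sigma'$ is strictly greater than $(\!\!(\texttt{0}\centerdot\sigma_{<k}\texttt{1})\!\!)_{\bf 2}$; these two bounds coincide, and comparing the half-open endpoints carefully shows the intervals are disjoint.

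For part (4), the plan is to combine (2) and (3) with countable additivity of $\mathfrak{L}$. Since $S$ is prefix-free, any two distinct $\varsigma,\varsigma'\in S$ satisfy the hypothesis of (3), so the family $\{\mathbb{I}_\varsigma\}_{\varsigma\in S}$ is pairwise disjoint. Countable additivity of the Lebesgue measure then gives $\mathfrak{L}(\bigcup_{\varsigma\in S}\mathbb{I}_\varsigma)=\sum_{\varsigma\in S}\mathfrak{L}(\mathbb{I}_\varsigma)=\sum_{\varsigma\in S}2^{-|\varsigma|}=\varOmega_S$ by part (2) and Definition~\ref{def:omega-s}. I expect part (3) to be the main obstacle: the disjointness argument requires care with the half-open endpoints and with the subtlety that a finite expansion equals an infinite one ending in all $\texttt{1}$'s (as in Example~\ref{ex:base2}), so I would be explicit about which endpoint belongs to which interval to rule out a shared boundary point. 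Parts (1), (2), and (4) are then essentially bookkeeping once (3) is nailed down.
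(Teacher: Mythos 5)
Your proposal is correct, and parts (1), (2) and (4) match the paper's proof essentially step for step: the same endpoint subtraction with the geometric tail $\sum_{j>|\sigma|}2^{-j}=2^{-|\sigma|}$ for (2), and the same combination of pairwise disjointness, countable additivity of $\mathfrak{L}$, and Definition~\ref{def:omega-s} for (4). Where you genuinely diverge is part (3). The paper argues by contradiction through expansions: it takes a point $\alpha\in\mathbb{I}_\sigma\cap\mathbb{I}_{\sigma'}$, writes it as $(\!\!(\texttt{0}\centerdot\sigma x_1x_2\dots)\!\!)_{\bf 2}=(\!\!(\texttt{0}\centerdot\sigma' y_1y_2\dots)\!\!)_{\bf 2}$, and invokes the uniqueness of the infinite binary expansion (Remark~\ref{rem:base2}) to force one of $\sigma,\sigma'$ to be a prefix of the other. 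You instead locate the first bit position $k$ where $\sigma$ and $\sigma'$ disagree and separate the two intervals on the real line: with common prefix $\tau=\sigma_{<k}$, one interval sits inside $\mathbb{I}_{\tau\texttt{0}}=(a,b]$ and the other inside $\mathbb{I}_{\tau\texttt{1}}=(b,c]$, where $b=(\!\!(\texttt{0}\centerdot\tau\texttt{0}\texttt{111}\dots)\!\!)_{\bf 2}=(\!\!(\texttt{0}\centerdot\tau\texttt{1})\!\!)_{\bf 2}$, and the half-open convention assigns the shared boundary point $b$ to only one of them. Your route is slightly longer but more elementary and self-contained, since it does not lean on the uniqueness of infinite expansions (which is exactly the delicate point, given that finite expansions coincide with ones ending in all $\texttt{1}$'s); the paper's route is shorter but delegates that subtlety to Remark~\ref{rem:base2}. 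You correctly identified the half-open endpoints as the place where care is needed, and your handling of them is sound.
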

\begin{proof}
\noindent

\indent
(1) is trivial; for (2) notice that

\begin{tabular}{rcl}
  $\mathfrak{L}(\mathbb{I}_{\sigma})$ & $=$ & $(\!\!(\texttt{0}\centerdot\sigma\texttt{111}\!
\dots)\!\!)_{\bf 2}-
 (\!\!(\texttt{0}\centerdot\sigma)\!\!)_{\bf 2}$     \\
    & $=$ & $(\!\!(\texttt{0}\centerdot    \underbrace{\texttt{0}\dots
    \texttt{0}}_{|\sigma|\textrm{-times}}
    \texttt{111}\!\dots)\!\!)_{\bf 2}$       \\
    & & \\[-1em]
   & $= $  & $\sum_{j=1}^{\infty}2^{-(|\sigma|+j)}$   \\
      & $= $  & $2^{-|\sigma|}$.
\end{tabular}

(3) If $\alpha\!\in\!\mathbb{I}_\sigma\cap \mathbb{I}_{\sigma'}$, then  $\alpha\!=\!(\!\!(\texttt{0}\centerdot\sigma x_1x_2x_3\!\dots)\!\!)_{\bf 2}$ and $\alpha\!=\!(\!\!(\texttt{0}\centerdot{\sigma'} y_1y_2y_3\!\dots)\!\!)_{\bf 2}$, where the sequences $\{x_i\}_{i>0}$ and $\{y_i\}_{i>0}$ are not all 0. Thus, by Remark~\ref{rem:base2}, the identity
$(\!\!(\texttt{0}\centerdot\sigma x_1x_2x_3\!\dots)\!\!)_{\bf 2}\!=\!(\!\!(\texttt{0}\centerdot{\sigma'} y_1y_2y_3\!\dots)\!\!)_{\bf 2}$ implies that either $\sigma$ should be a prefix of ${\sigma'}$ or ${\sigma'}$ should be a prefix of $\sigma$.

(4) We have $\mathfrak{L}(\bigcup_{\varsigma\in S}\mathbb{I}_\varsigma)\!=\!\sum_{\varsigma\in S}\mathfrak{L}(\mathbb{I}_\varsigma)$ since $\mathbb{I}_\varsigma$'s are pairwise disjoint by item (3). The result follows now from item (2) and Definition~\ref{def:omega-s}.
\end{proof}

\begin{proposition}[Kraft's inequality, 1949]\label{prop:kraft}
\noindent

\indent
For every prefix-free   $S\subseteq\boldsymbol\Sigma$, we have ${\varOmega}_S\!\leqslant\!1$.
\end{proposition}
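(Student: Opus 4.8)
The plan is to read off the result directly from Lemma~\ref{lem:big}, which has already carried out all the geometric work. Since $S$ is prefix-free by hypothesis, item~(4) of that lemma converts the defining sum into a Lebesgue measure: $\varOmega_S = \mathfrak{L}\big(\bigcup_{\varsigma\in S}\mathbb{I}_\varsigma\big)$. The task thus reduces entirely to bounding the measure of this union by~$1$.

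Next I would invoke item~(1) of Lemma~\ref{lem:big}, which says that each $\mathbb{I}_\varsigma$ is a subinterval of $(0,1]$. Consequently $\bigcup_{\varsigma\in S}\mathbb{I}_\varsigma \subseteq (0,1]$, and by the monotonicity of the Lebesgue measure we obtain $\mathfrak{L}\big(\bigcup_{\varsigma\in S}\mathbb{I}_\varsigma\big) \leqslant \mathfrak{L}\big((0,1]\big) = 1$. Combining this inequality with the identity from the previous paragraph yields $\varOmega_S \leqslant 1$, which is exactly the claim.

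There is no serious obstacle left, because the whole content of Kraft's Inequality has been front-loaded into Lemma~\ref{lem:big}: the pairwise disjointness of the intervals (item~(3)), together with the countable additivity of $\mathfrak{L}$, is precisely what makes the equation $\mathfrak{L}(\bigcup_{\varsigma\in S}\mathbb{I}_\varsigma)=\varOmega_S$ of item~(4) hold. The only point deserving a second glance is that $S$ may be countably infinite, so that the union is a genuine countable union rather than a finite one; but this is already accommodated by the disjointness and the $\sigma$-additivity of $\mathfrak{L}$ underlying item~(4), so at the level of this proposition I need only the trivial monotonicity step to finish.
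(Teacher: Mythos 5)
Your proof is correct and coincides with the paper's own argument: both apply item~(4) of Lemma~\ref{lem:big} to rewrite $\varOmega_S$ as $\mathfrak{L}\big(\bigcup_{\sigma\in S}\mathbb{I}_\sigma\big)$, then use item~(1) and monotonicity of the Lebesgue measure to bound this by $\mathfrak{L}\big((0,1]\big)=1$. Your closing remark that the real content (disjointness plus $\sigma$-additivity) is already packaged into Lemma~\ref{lem:big}(4) is an accurate observation, not a gap.
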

\begin{proof}
\noindent

\indent
By Lemma~\ref{lem:big}, item (4), we have ${\varOmega}_S\!=\!\mathfrak{L}(\bigcup_{\sigma\in S}\mathbb{I}_\sigma)$, and
$\bigcup_{\sigma\in S}\mathbb{I}_\sigma\!\subseteq(0,1]$ holds
by item (1) of Lemma~\ref{lem:big}. Therefore, ${\varOmega}_S\!\leqslant\!\mathfrak{L}(0,1]\!=\!1$.
\end{proof}

For an alternative proof of Proposition~\ref{prop:kraft}, see, e.g., \cite[Thm.~11.4, pp.~182-3]{RS07}.
Let us notice that the converse of Kraft's inequality is not true, since, as we saw in Examples~\ref{ex:omega-s} and~\ref{ex:prefixf},
${\varOmega}_{\{\texttt{0},\texttt{00}\}}
\!=\!\frac{3}{4}\!<\!1$, but the set $\{\texttt{0},\texttt{00}\}$ is not prefix-free.

One way to ensure that the set of all  the programs becomes prefix-free is to adopt   the following convention:

\begin{convention}[prefix-free programs]\label{conv}
\noindent

\indent
Every program ends with the ``{\tt END}'' command \textup{(see \cite[p.~3]{Monro78})}. This command can appear nowhere else in the program, only at the very end.
\hfill\ding{71}
\end{convention}

Every other sub-routine may start with ``{\tt begin}'' and finish  with ``{\tt end},'' just like   the programs of  Example~\ref{ex:progs}.

\begin{example}[prefix-free programs]\label{ex:conv}
\noindent

\indent
Program~$i$ in the following table is a prefix of Program~$i\!i$ (and a suffix of Program~$i\!i\!i$).

\begin{center}
\begin{tabular}{||l||l||l||}
 \hline
  \textup{Program $i$}
  & \textup{Program $i\!i$} &   \textup{Program $i\!i\!i$}
   \\
     \hline \hline
  \textup{\texttt{BEEP}}
  &
  \textup{\texttt{BEEP}} & \textup{\texttt{PRINT} ``{\sf error}!"}
  \\
  &
  \textup{\texttt{PRINT} ``{\sf error}!"} & \textup{\texttt{BEEP}}
  \\
  \hline
\end{tabular}
\end{center}

With Convention~\ref{conv}, the programs should look like the following:

\begin{center}
\begin{tabular}{||l||l||l||}
 \hline
  \textup{Program I}
  & \textup{Program I\!I} & \textup{Program I\!I\!I}
   \\
     \hline \hline
  \textup{\texttt{BEEP}}
  &
  \textup{\texttt{BEEP}} & \textup{\texttt{PRINT} ``{\sf error}!"}
  \\
   \textup{\texttt{END}} &
  \textup{\texttt{PRINT} ``{\sf error}!"} & \textup{\texttt{BEEP}}
  \\
  &
  \textup{\texttt{END}} & \textup{\texttt{END}}
  \\
  \hline
\end{tabular}
\end{center}

\noindent
Program I is not a prefix of Program I\!I (though, even with the above convention, Program I is a suffix of Program I\!I\!I, which is not a problem).
\hfill\ding{71}\end{example}

From now on, let us be given a \textbf{\textsl{fixed programming language}} by Convention~\ref{conv}.

\section{To Be A Probability, Or Not To Be}

\begin{question}[is ${\varOmega}_S$ a probability?]\label{q:big}
\noindent

\indent
Why can  ${\varOmega}_S$ be interpreted as the {\em probability} that a randomly given binary string $\sigma\!\in\!\boldsymbol\Sigma$ belongs to $S$? Even when  $S\!\subseteq\!\boldsymbol\Sigma$ is a prefix-free set.
\hfill\ding{71}\end{question}

Let us repeat that the number ${\varOmega}_S$ could be greater than one for some sets $S$ of finite binary strings (Example~\ref{ex:omega-s}), but if the set $S$ is prefix-free, then ${\varOmega}_S$ is a number between $0$ and $1$ (Proposition~\ref{prop:kraft}).  Let us also note that ${\varOmega}$ satisfies Kolmogorov's axioms of a measure: ${\varOmega}_{\bigcup_i\!S_i}\!=\!\sum_i{\varOmega}_{S_i}$ for every family $\{S_i\}_i$ of  pairwise disjoint sets; thus, ${\varOmega}_\emptyset\!=\!0$.
But  it is not a probability measure.
 Restricting the sets to the prefix-free ones
will not solve the problem, as they are not closed under unions (Example~\ref{ex:prefixf}).

Now that, by Convention~\ref{conv}, all the programs are prefix-free, a special case of
Question~\ref{q:big} is:

\begin{question}[is $\boldsymbol\Omega$ a halting probability?]\label{q:omega}
\noindent

\indent
Why can 
$\boldsymbol\Omega$ be said to be the {\em halting probability} of the randomly chosen finite binary strings?
\hfill\ding{71}\end{question}

Unfortunately, many scholars seem to have believed that the number $\boldsymbol\Omega$ is the halting probability of input-free programs; see,  e.g.,  \cite{Gardner79,BFGM06,RS07,CalCha10,Schmidhuber22}.
Even though the ${\varOmega}$'s of prefix-free sets are non-greater than one, ${\varOmega}$ is not a probability measure, even when restricted to the prefix-free sets, as those sets are not closed under disjoint unions.
Restricting the sets to   the subsets of a fixed prefix-free set whose ${\varOmega}$ is 1 (such as $\mathcal{C}$ in Example~\ref{ex:omega-s})
can solve the problem. But for the input-free programs, even with Convention~\ref{conv}, we do not have this possibility:

\begin{lemma}[${\varOmega}_{\mathbb{P}}\!\neq\!1$]
\label{lem:pi}
\noindent

\indent
${\varOmega}_{\mathbb{P}}\!<\!1$.
\end{lemma}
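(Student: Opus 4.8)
The plan is to combine the prefix-free structure of $\mathbb{P}$ (guaranteed by Convention~\ref{conv}) with the geometric reading of $\varOmega$ supplied by Lemma~\ref{lem:big}, and then to show that the associated family of intervals cannot exhaust all of $(0,1]$. By Kraft's inequality (Proposition~\ref{prop:kraft}) we already know $\varOmega_{\mathbb{P}}\!\leqslant\!1$, so the only thing at stake is strictness; I would obtain it by exhibiting a whole subinterval of $(0,1]$ of positive length that is disjoint from every $\mathbb{I}_\sigma$ with $\sigma\!\in\!\mathbb{P}$.

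First I would invoke item (4) of Lemma~\ref{lem:big}: since $\mathbb{P}$ is prefix-free, $\varOmega_{\mathbb{P}}\!=\!\mathfrak{L}\big(\bigcup_{\sigma\in\mathbb{P}}\mathbb{I}_\sigma\big)$. Next I would record the key structural observation about binary codes: every program's binary code is a concatenation of $8$-bit {\tt ASCII} blocks, and the first character of any program is a standard printable symbol (a keyword or identifier letter), whose {\tt ASCII} value is below $128$ and therefore has leading bit $\texttt{0}$. Consequently every $\sigma\!\in\!\mathbb{P}$ begins with $\texttt{0}$, and moreover $|\sigma|\!\geqslant\!8$.

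With this in hand I would consider the one-bit string $\texttt{1}$ and check that it is incompatible with every element of $\mathbb{P}$: it is not a prefix of any $\sigma\!\in\!\mathbb{P}$ (each such $\sigma$ begins with $\texttt{0}$), and no $\sigma\!\in\!\mathbb{P}$ is a prefix of $\texttt{1}$ (since $|\sigma|\!\geqslant\!8\!>\!1$). By item (3) of Lemma~\ref{lem:big} this gives $\mathbb{I}_{\texttt{1}}\cap\mathbb{I}_\sigma\!=\!\emptyset$ for all $\sigma\!\in\!\mathbb{P}$, so $\bigcup_{\sigma\in\mathbb{P}}\mathbb{I}_\sigma\subseteq(0,1]\setminus\mathbb{I}_{\texttt{1}}$. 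Since $\mathbb{I}_{\texttt{1}}\!=\!(\tfrac12,1]$ has length $\tfrac12$ by item (2) of Lemma~\ref{lem:big}, I would conclude
$$\varOmega_{\mathbb{P}}=\mathfrak{L}\Big(\bigcup_{\sigma\in\mathbb{P}}\mathbb{I}_\sigma\Big)\leqslant\mathfrak{L}\big((0,\tfrac12]\big)=\tfrac12<1.$$

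The only genuinely non-routine point — and the step I expect to be the main obstacle — is the justification that $\texttt{1}$ (equivalently, some fixed short string) is incompatible with every program code; this rests on the {\tt ASCII} encoding convention rather than on pure measure theory. If one prefers not to lean on the leading-bit feature of printable {\tt ASCII}, the same scheme works with any single string $\tau$ that neither prefixes nor is prefixed by a valid program code (for instance the code of a byte sequence that can never begin a syntactically well-formed program); any such $\tau$ contributes a missing interval $\mathbb{I}_\tau$ of length $2^{-|\tau|}\!>\!0$, again forcing $\varOmega_{\mathbb{P}}\!<\!1$.
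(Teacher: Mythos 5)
Your proposal is correct and is essentially the paper's own argument: both proofs exhibit a binary string $\tau$ that neither prefixes nor is prefixed by any program code and then charge the resulting missing weight $2^{-|\tau|}>0$ against the bound of $1$ --- the paper by adjoining the witness to $\mathbb{P}$ and applying Kraft's inequality (Proposition~\ref{prop:kraft}) to the enlarged prefix-free set, you by deleting the disjoint interval $\mathbb{I}_{\texttt{1}}$ from $(0,1]$, which is the same computation since Kraft's inequality is itself proved via Lemma~\ref{lem:big}. Your concrete witness (the leading {\tt ASCII} bit of a program's first character is $\texttt{0}$, and program codes have length at least $8$) is just a sharper instantiation of the paper's ``find a letter or a short string of letters, such as {\tt X}, that is not a prefix of any command and that no program is a prefix of,'' and like the paper's choice it rests on a feature of the fixed encoding rather than on measure theory.
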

\begin{proof}
\noindent

\indent
Find a letter or a short string of letters  (such as {\tt X} or {\tt XY}, etc.) that is not a prefix of any command, and no program can be a  prefix of it.
Let $\mathfrak{X}$ be its {\tt ASCII} code, and put
$P'\!=\!\mathbb{P}\cup\{\mathfrak{X}\}$. The set $P'$ is still prefix-free, and so Kraft's inequality (Proposition~\ref{prop:kraft}) can be applied to it: ${\varOmega}_{\mathbb{P}}\!+\!
2^{-|\mathfrak{X}|}
\!=\!{\varOmega}_{P'}
\!\leqslant\!1$.
Since $2^{-|\mathfrak{X}|}
\!>\!0$, then we have ${\varOmega}_{\mathbb{P}}\!<\!1$.
\end{proof}

For making $\varOmega$ a probability measure, we suggest a two-fold idea:

(1) We consider sets of input-free programs only,  and

(2) We divide their Omega by ${\varOmega}_{\mathbb{P}}$ to get a  probability measure.

\begin{definition}[$\mho_S$]\label{def:mho}
\noindent

\indent
For a set $S\subseteq\mathbb{P}$ of input-free programs, let
$\mho_S\!=\!\dfrac{{\varOmega}_S}{{\varOmega}_{\mathbb{P}}}.$
\hfill\ding{71}\end{definition}

It is easy to verify that this {\em is} a probability measure: we have $\mho_{\emptyset}\!=\!0$,
$\mho_{\mathbb{P}}\!=\!1$, and for every indexed family $\{S_i\!\subseteq\!\mathbb{P}\}_i$ of  pairwise disjoint sets of input-free programs,   we have
$\mho_{\bigcup_i\!S_i}\!=\!\sum_i\mho_{S_i}$.

\subsection{A Recapitulation}
Let us recapitulate. The number $\boldsymbol\Omega$ (Definition~\ref{def:omega-n}) was meant to be ``the probability that a computer program whose
bits are generated one by one by independent tosses of a fair coin will eventually halt'' \cite[p.~236]{CalCha10}.
But the fact of the matter is that if we generate a finite binary code by tossing a fair coin bit by bit,
then it is very probable that the resulted string is not the binary code of  a program at all. It is also
highly probable that it is the code of a program that takes some inputs (see Example~\ref{ex:progs}). Lastly, if the
generated finite binary string is the binary code of an input-free program, then we are allowed to ask whether it will eventually halt after running. After all this
contemplation, we may start defining or calculating the probability of halting.

The way $\boldsymbol\Omega$ was defined
works for any prefix-free set of finite binary strings (Definition~\ref{def:omega-s}). Kraft’s inequality (Proposition~\ref{prop:kraft}) ensures that the
number ${\varOmega}_S$, for every prefix-free set $S$, lies in the interval $[0, 1]$. But why on earth can ${\varOmega}_S$ be called
the probability that a randomly given finite binary string belongs to $S$? (Question~\ref{q:big}). The class of all
prefix-free sets is not closed under disjoint unions  (Example~\ref{ex:prefixf}), and
  there is no {\em sample space} for the proposed measure: the ${\varOmega}$ of all the binary codes of the input-free programs is not equal to $1$ (Lemma~\ref{lem:pi}), even though that set  is prefix-free by Convention~\ref{conv}. Summing up, there is no measure to see that  $\boldsymbol\Omega$ is   the halting probability of a randomly given finite binary string, and the answer to Question~\ref{q:omega} is a big ``no''.

Even though ${\varOmega}$ satisfies Kolmogorov's axioms of a measure, it is not a probability measure, as some sets get measures bigger than one.
Restricting the sets to the prefix-free ones will not solve the problem, as they are not closed under union. Restricting the sets to   the subsets of a fixed prefix-free set whose ${\varOmega}$ is $1$
can solve the problem by making ${\varOmega}$ a probability measure. So can   restricting the sets to   the subsets of a fixed prefix-free set (such as $\mathbb{P}$)  and then dividing the ${\varOmega}$'s of its subsets by the ${\varOmega}$ of that fixed set (just like Definition~\ref{def:mho}).

This was our proposed remedy. Take the sample space to  be $\mathbb{P}$, the set of the binary codes of all the input-free programs. Then, for every set $S$ of (input-free) programs
 ($S\subseteq\mathbb{P}$),
let
$\mho_S\!=\!
\frac{\varOmega_S\!}{\!\varOmega_{\mathbb{P}}}$ (Definition~\ref{def:mho}).
This is a {\em real} probability measure that satisfies Kolmogorov's axioms. Now, the {\em new} halting probability is  $\boldsymbol\mho\!=\!\mho_{\mathbb{H}}\!=\!
\frac{\boldsymbol\Omega\!}{\!\varOmega_{\mathbb{P}}}$.
Dividing $\boldsymbol\Omega$ by a computable real number ($\varOmega_{\mathbb{P}}$) does make it look more like a (conditional) probability, but will not cause it to lose any of the non-computability or randomness properties. Our upside-down Omega, $\boldsymbol\mho$,  should have most (if not all) of the properties of $\boldsymbol\Omega$ established in the literature.

\section{Un/Computing the Halting Probability}

Let us see read  through one of Chaitin's books (\cite[p.~112, original emphasis]{Chaitin05}):
\begin{itemize}
\item[]\begin{quote}
{
let's put {\bf all possible} programs in a bag, shake it up, close our eyes, and pick out a program. What's the probability that this program that we've just chosen at random will eventually halt? Let's express that probability as an infinite precision binary real between zero and one. [\!...\!] You sum for each program that halts the probability of getting precisely that program by chance:
$\Omega\!=\!\sum_{\textrm{\tiny program }p \ \textrm{\tiny   halts}}2^{-(\textrm{\tiny size in bits of }p)}$.
Each $k$-bit self-delimiting program $p$ that halts
contributes $1/{2^k}$ to the value of $\Omega$. The self-delimiting program proviso is crucial: Otherwise the halting
probability has to be defined for programs of {\bf each particular size}, but
it cannot be defined over {\bf all} programs of {\bf arbitrary size}.}
\end{quote}
\end{itemize}

We are in partial agreement with Chaitin on the following matter:

\begin{lemma}[halting probability of i.f. prog.'s with a fixed length]
\label{lem:agree}
\noindent

\indent
The halting probability of all the input-free programs with a fixed length $\ell$ is equal to $\sum^{|p|=\ell}_{p\textrm{ halts}}2^{-|p|}.$
\end{lemma}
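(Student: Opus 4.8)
The plan is to exhibit, for each fixed length $\ell$, a genuine \emph{finite uniform} probability space in which the displayed sum is literally the probability of the halting event. First I would take the sample space to be the set of all binary strings of length exactly $\ell$; this set is finite, with cardinality $2^\ell$. Generating a string bit by bit by $\ell$ independent fair coin tosses assigns to each particular string $\sigma$ of length $\ell$ the number $2^{-\ell}=2^{-|\sigma|}$, since the $\ell$ tosses are independent and each lands \texttt{0} or \texttt{1} with probability $\tfrac12$.

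Next I would check that this uniform assignment is a bona fide probability measure satisfying the Kolmogorov axioms recalled in the introduction. Finiteness makes this immediate: the total mass is $2^\ell\cdot 2^{-\ell}=1$, and additivity over disjoint subsets is inherited from the underlying sum. This is precisely the point on which one can agree with Chaitin---at a single fixed size the measure normalises to $1$ on its own, with no need for any prefix-free device; the obstruction that forces self-delimiting programs (and the phenomenon behind Lemma~\ref{lem:pi} and Proposition~\ref{prop:kraft}) arises only when one tries to amalgamate all the lengths at once.

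Finally I would identify the event and compute. The favourable outcomes are the strings of length $\ell$ that are binary codes of halting input-free programs. Writing $N_\ell$ for the number of such programs, their probability under the uniform measure is
\[
N_\ell\cdot 2^{-\ell}=\sum_{\substack{p\textrm{ halts}\\ |p|=\ell}}2^{-\ell}=\sum_{\substack{p\textrm{ halts}\\ |p|=\ell}}2^{-|p|},
\]
where the last equality merely rewrites $2^{-\ell}$ as $2^{-|p|}$ because every summand has $|p|=\ell$. By Definition~\ref{def:omega-s} this is exactly the $\varOmega$ of the set of halting input-free programs of length $\ell$, which is the claimed value.

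I do not expect a serious obstacle in the argument itself; the only thing that needs care is conceptual rather than technical, namely keeping the length \emph{truly fixed} so that the sample space stays finite and the uniform measure automatically has total mass one. The entire content of the lemma is that this one restriction is what turns the defining sum of $\boldsymbol\Omega$ into an honest probability, in sharp contrast with the unrestricted situation.
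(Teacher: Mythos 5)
Your proposal is correct and follows essentially the same route as the paper: both take the uniform measure on the $2^\ell$ binary strings of length $\ell$, identify the halting programs of that length as the favourable outcomes, and rewrite the resulting ratio as $\sum^{|p|=\ell}_{p\textrm{ halts}}2^{-|p|}$. Your additional explicit check of the Kolmogorov axioms is a harmless elaboration of what the paper leaves implicit.
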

\begin{proof}
\noindent

\indent
Fix a number $\ell$. The probability of getting a fixed binary string of length $\ell$ by tossing a fair coin (whose one side is `{\tt 0}' and the other `{\tt 1}') is $\frac{1}{2^{\ell}}$, and the halting probability of the input-free programs with  length $\ell$ is $$\dfrac{\textrm{the number of halting programs of length }\ell}{\textrm{the number of all binary strings of length }\ell}\!=\!\dfrac{\#\{p\!\in\!\mathbb{P}\!: p\textrm{ halts } \&\  |p|\!=\!\ell\}}{2^\ell}, $$
since there are $2^\ell$ binary strings of length $\ell$ (see~\cite{Linde24}). Thus, the halting probability of programs with length $\ell$ can be written as  $\sum^{|p|=\ell}_{p\textrm{ halts}}2^{-|p|}$.
\end{proof}

Our disagreement is about the halting probability of input-free programs, not with a fixed length but with an arbitrary length.

\begin{definition}[$\mathcal{N}(\ell)$]\label{def:Nl}
\noindent

\indent
Let $\mathcal{N}(\ell)$ be the number of halting input-free programs of length $\ell$.
\hfill\ding{71}\end{definition}

So, the number $\boldsymbol\Omega$ can be written as $\sum_{\ell=1}^\infty\mathcal{N}(\ell)2^{-\ell}$; see \cite[p.~1]{Schmidhuber22}. By what we quoted above, from \cite{Chaitin05}, according to Chaitin (and almost everybody else), the halting probability of programs   is $\sum_{\ell=1}^{\infty}\mathcal{N}(\ell)2^{-\ell}
\!=\!\sum_{p\textrm{ halts}}2^{-|p|}(\!=\!\boldsymbol\Omega)$!
We believe this to be an error, since we can show that $\boldsymbol\Omega$ is not the halting probability {\em under any measure}:

\begin{theorem}[halting probability \textrm{[by any measure]} $\!<\!\boldsymbol\Omega$]
\label{thm:dis}
\noindent

\indent
The halting probability of input-free programs, under any probability measure on $\boldsymbol\Sigma$, is less than
$\boldsymbol\Omega$.
\end{theorem}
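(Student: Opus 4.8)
The plan is to first make precise which measures are in play, since the statement as literally quantified over \emph{every} probability measure on $\boldsymbol\Sigma$ is false: a point mass on a single halting program has halting probability $1>\boldsymbol\Omega$ (recall $\boldsymbol\Omega<1$ by Lemma~\ref{lem:pi}). The admissible class is the one already used in Example~\ref{ex:prob1}, namely the fair-coin measures, for which each bit is an independent fair toss, so that the probability $\mu(\{\sigma\})$ of a string depends only on its length. I would write $\mu(\{\sigma\})=w_{|\sigma|}$; since there are exactly $2^{\ell}$ strings of length $\ell$ (as in Lemma~\ref{lem:agree}), the normalization $\mu(\boldsymbol\Sigma)=1$ reads $\sum_{\ell\geqslant 1}2^{\ell}w_\ell=1$. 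With $\mathcal{N}(\ell)$ as in Definition~\ref{def:Nl}, the halting probability becomes $\mu(\mathbb{H})=\sum_{\ell\geqslant 1}\mathcal{N}(\ell)\,w_\ell$, to be compared with $\boldsymbol\Omega=\sum_{\ell\geqslant 1}\mathcal{N}(\ell)\,2^{-\ell}$.

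The core of the proof is a term-by-term domination. Because all summands of $\sum_{\ell}2^{\ell}w_\ell=1$ are non-negative, each is at most the total, i.e. $2^{\ell}w_\ell\leqslant 1$, equivalently $w_\ell\leqslant 2^{-\ell}$ for every $\ell$. Multiplying by $\mathcal{N}(\ell)\geqslant 0$ and summing (all series have non-negative terms, so convergence and termwise subtraction are unproblematic) would give
$$\boldsymbol\Omega-\mu(\mathbb{H})=\sum_{\ell\geqslant 1}\mathcal{N}(\ell)\big(2^{-\ell}-w_\ell\big)\geqslant 0.$$

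To promote this to a strict inequality — the only genuinely delicate point — I would exhibit one length $\ell^{\ast}$ whose term is strictly positive. Note that $2^{\ell}w_\ell=1$ can hold for at most one length, for otherwise $\sum_{\ell}2^{\ell}w_\ell\geqslant 2>1$; hence $w_\ell<2^{-\ell}$ for all $\ell$ save at most one exceptional value. On the other hand, there are infinitely many lengths with $\mathcal{N}(\ell)\geqslant 1$: halting input-free programs are infinite in number, and since only finitely many strings share a given length, their lengths are unbounded. Picking such an $\ell^{\ast}$ away from the single exceptional length yields $\mathcal{N}(\ell^{\ast})\big(2^{-\ell^{\ast}}-w_{\ell^{\ast}}\big)>0$, whence $\boldsymbol\Omega-\mu(\mathbb{H})>0$, as desired.

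I expect the main obstacle to be conceptual rather than computational: the inequality is a one-line consequence of $w_\ell\leqslant 2^{-\ell}$, but that bound is precisely what breaks for arbitrary measures, so the whole result rests on isolating the fair-coin hypothesis implicit in Example~\ref{ex:prob1} and defending it as the intended reading. The supporting fact that halting programs occur at infinitely many lengths (needed only for strictness) should be recorded explicitly. Finally, I would contrast this with the honest measure $\mho$ of Definition~\ref{def:mho}: the reason $\boldsymbol\Omega$ overshoots every fair-coin halting probability is exactly that the raw weights $2^{-|\sigma|}$ cannot be normalized over all of $\boldsymbol\Sigma$ (cf.\ Lemma~\ref{lem:pi}), which is why dividing by $\varOmega_{\mathbb{P}}$ is forced.
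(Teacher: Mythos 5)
Your proof is correct and rests on the same skeleton as the paper's: both reduce the halting probability to $\sum_{\ell}\mathcal{N}(\ell)\boldsymbol\pi_\ell$ and play it off against $\boldsymbol\Omega=\sum_{\ell}\mathcal{N}(\ell)2^{-\ell}$ via the normalization $\sum_{\ell}2^{\ell}\boldsymbol\pi_\ell=1$. The finishing moves differ. The paper takes $m$ with $2^m\boldsymbol\pi_m=\max_\ell 2^\ell\boldsymbol\pi_\ell$ (existence justified by $2^\ell\boldsymbol\pi_\ell\to 0$) and splits into two cases: if $2^m\boldsymbol\pi_m=1$ all other $\boldsymbol\pi_\ell$ vanish and one compares with the missing terms of $\boldsymbol\Omega$; if $2^m\boldsymbol\pi_m<1$ the whole sum is bounded by $2^m\boldsymbol\pi_m\cdot\boldsymbol\Omega<\boldsymbol\Omega$. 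You instead observe that each term of a non-negative series summing to $1$ is at most $1$, so $w_\ell\leqslant 2^{-\ell}$ termwise, and then locate one strictly dominated term. Your route is slightly more economical --- it avoids the maximum and the case split --- and for strictness it needs only that halting programs occur at two distinct lengths, which is exactly what the paper uses implicitly in its first case (and states explicitly before Theorem~\ref{thm:disagree-n}). You also make explicit something the paper leaves tacit: ``any probability measure on $\boldsymbol\Sigma$'' must mean a measure under which equal-length strings are equiprobable, since a point mass on a single halting program has halting probability $1>\boldsymbol\Omega$; the paper smuggles this hypothesis in by defining $\boldsymbol\pi_\ell$ as ``the probability of an(y) element of length $\ell$.'' That clarification, and your closing remark tying the failure of normalization to Lemma~\ref{lem:pi} and Definition~\ref{def:mho}, are worth keeping.
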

\begin{proof}
\noindent

\indent
For every positive integer $\ell$, let $\boldsymbol\pi_\ell$ be the probability of an(y) element of $\boldsymbol\Sigma$ with length $\ell$. Therefore, $\sum_{\ell =1}^{\infty}2^\ell\boldsymbol\pi_\ell=1$, since there are $2^\ell$ binary strings of length $\ell$. The halting probability (with the probability measure $\boldsymbol\pi$) is then $\sum_{\ell=1}^{\infty}\mathcal{N}(\ell)
\boldsymbol\pi_{\ell}$; see Definition~\ref{def:Nl}. Let $2^m\boldsymbol\pi_m$ be the maximum of $\{2^\ell\boldsymbol\pi_\ell\}_{\ell=1}^{\infty}$ (which exists by the convergence of $\sum_{\ell =1}^{\infty}2^\ell\boldsymbol\pi_\ell$ and so $\lim_{\ell\rightarrow\infty}2^\ell\boldsymbol\pi_\ell=0$).
We distinguish two cases:

(1) If $2^m\boldsymbol\pi_m=1$, then for every $\ell\neq m$, we should have $\boldsymbol\pi_\ell=0$. Hence,

$\sum_{\ell=1}^{\infty}\mathcal{N}(\ell)
\boldsymbol\pi_{\ell}=\mathcal{N}(m)
\boldsymbol\pi_{m}=\mathcal{N}(m)2^{-m}<
\sum_{\ell=1}^{\infty}\mathcal{N}(\ell)
2^{-\ell}=
\boldsymbol\Omega$,

\noindent
 since there exists some $\ell\neq m$ with $\mathcal{N}(\ell)>0$.

 (2)
So, we can assume that $2^m\boldsymbol\pi_m<1$. In this case,

 $\sum_{\ell=1}^{\infty}\mathcal{N}(\ell)
\boldsymbol\pi_{\ell}\!=\!\sum_{\ell=1}^{\infty}
\mathcal{N}(\ell)2^{-\ell}\!\cdot\!2^\ell
\boldsymbol\pi_{\ell}\!\leqslant\!2^m
\boldsymbol\pi_m\sum_{\ell=1}^{\infty}
\mathcal{N}(\ell)2^{-\ell}\!<\!\sum_{\ell=1}^{\infty}
\mathcal{N}(\ell)2^{-\ell}\!=\!\boldsymbol\Omega$.

Therefore, regardless of the probability measure ($\boldsymbol\pi$), the number $\boldsymbol\Omega$ exceeds the probability of obtaining an input-free halting program by tossing a fair coin a randomly finite number of times.
\end{proof}

 Thus, there is no reason to believe that   the halting probability (of ``all programs of arbitrary size'') is  $\sum_{p\textrm{ halts}}2^{-|p|}(\!=\!\boldsymbol\Omega)$. As pointed out by Chaitin, the series $\sum_{p\textrm{ halts}}2^{-|p|}$ could be greater than $1$, or may even diverge, if the set of programs is not taken to be prefix-free (what ``took ten years until [he] got it right''). So, the fact that, for prefix-free   programs, the real number   $\sum_{p\textrm{ halts}}2^{-|p|}$ lies  between $0$ and $1$ (by Kraft's inequality, Proposition~\ref{prop:kraft}) does not make it a probability of finite strings.

%
%
%
%
%
%
%

Let $n$ be a sufficiently large natural number such that there are $i\neq j\leqslant n$ with $\mathcal{N}(i),\mathcal{N}(j)\!>\!0$. Let $\boldsymbol\Sigma_{\leqslant n}$ be the set of all finite binary strings with length $\leqslant n$. By replacing $\boldsymbol\Sigma$ with $\boldsymbol\Sigma_{\leqslant n}$ and $\infty$ with $n$ in the proof of Theorem~\ref{thm:dis}, one can prove the following theorem, noting that $\ell$'s should be $\leqslant n$ and that $$\sum^{|p|\leqslant n}_{p\textrm{ halts}}2^{-|p|}=
\sum_{\ell=1}^{n}\mathcal{N}(\ell)2^{-\ell}.$$

\begin{theorem}[halting probability of i.f. prog.'s with a bounded length]
\label{thm:disagree-n}
\noindent

\indent
For sufficiently large $n$'s, the halting probability of all the input-free programs with  length $\leqslant n$, under any probability measure on $\boldsymbol\Sigma_{\leqslant n}$, is less than  $\sum^{|p|\leqslant n}_{p\textrm{ halts}}2^{-|p|}.$
\hfill{\mbox{\ding{113}}}
\end{theorem}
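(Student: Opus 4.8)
The plan is to replay the proof of Theorem~\ref{thm:dis} essentially verbatim, simply truncating every infinite sum at $n$ and working inside the finite sample space $\boldsymbol\Sigma_{\leqslant n}$. First I would fix an arbitrary probability measure on $\boldsymbol\Sigma_{\leqslant n}$ and write $\boldsymbol\pi_\ell$ for the common probability of any single string of length $\ell$ (for $\ell\leqslant n$), exactly as in the infinite case. Since there are $2^\ell$ strings of each length $\ell$, normalization gives $\sum_{\ell=1}^{n}2^\ell\boldsymbol\pi_\ell=1$, and the quantity to be bounded is the halting probability $\sum_{\ell=1}^{n}\mathcal{N}(\ell)\boldsymbol\pi_\ell$ (Definition~\ref{def:Nl}).

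Next I would let $2^m\boldsymbol\pi_m$ be the maximum of the finite set $\{2^\ell\boldsymbol\pi_\ell\}_{\ell=1}^{n}$; unlike the infinite case, here the maximum exists for free because the index set is finite, so no convergence or limit argument is needed. Then I split into the same two cases. If $2^m\boldsymbol\pi_m=1$, then $\boldsymbol\pi_\ell=0$ for every $\ell\neq m$, whence the halting probability collapses to $\mathcal{N}(m)2^{-m}$, which is strictly smaller than $\sum_{\ell=1}^{n}\mathcal{N}(\ell)2^{-\ell}$ provided some length $\ell\neq m$ with $\ell\leqslant n$ still contributes a halting program. If instead $2^m\boldsymbol\pi_m<1$, the chain
$$\sum_{\ell=1}^{n}\mathcal{N}(\ell)\boldsymbol\pi_\ell=\sum_{\ell=1}^{n}\mathcal{N}(\ell)2^{-\ell}\cdot 2^\ell\boldsymbol\pi_\ell\leqslant 2^m\boldsymbol\pi_m\sum_{\ell=1}^{n}\mathcal{N}(\ell)2^{-\ell}<\sum_{\ell=1}^{n}\mathcal{N}(\ell)2^{-\ell}$$
closes the argument, the final strict inequality using only that $\sum_{\ell=1}^{n}\mathcal{N}(\ell)2^{-\ell}>0$.

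The one point that genuinely requires the hypothesis that $n$ be sufficiently large is the degenerate Case~1. There the entire mass is concentrated on a single length $m$, and I must guarantee that the full truncated sum $\sum_{\ell=1}^{n}\mathcal{N}(\ell)2^{-\ell}$ strictly exceeds the single surviving term $\mathcal{N}(m)2^{-m}$; equivalently, that there is at least one length $\ell\neq m$ with $\ell\leqslant n$ and $\mathcal{N}(\ell)>0$. This is exactly what the choice of $n$ delivers: with $i\neq j\leqslant n$ both satisfying $\mathcal{N}(i),\mathcal{N}(j)>0$, at least one of $i,j$ is distinct from $m$ and supplies the needed extra positive term. So I expect the only real subtlety to be recording this hypothesis precisely — for small $n$, where perhaps only one length carries halting programs, the strict inequality can collapse into an equality, which is precisely why the statement is asserted only for sufficiently large $n$, and the two-distinct-lengths condition is the sharp form of that phrase.
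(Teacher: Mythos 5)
Your proposal is correct and matches the paper's intended argument exactly: the paper itself proves this theorem by instructing the reader to replace $\boldsymbol\Sigma$ with $\boldsymbol\Sigma_{\leqslant n}$ and $\infty$ with $n$ in the proof of Theorem~\ref{thm:dis}, using the hypothesis that there exist $i\neq j\leqslant n$ with $\mathcal{N}(i),\mathcal{N}(j)>0$ precisely to rescue the strict inequality in the degenerate case, as you do. Your observation that the maximum now exists trivially and that the two-distinct-lengths condition is the sharp form of ``sufficiently large'' is a faithful and slightly more explicit rendering of the same proof.
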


\subsection{Halting Probability for  Real Numbers}
Let us see now one of the most recent explanations as to why
$\boldsymbol\Omega$ is considered to be the halting probability of  input-free programs (\cite[p.~1613]{BCH21}):
\begin{itemize}
\item[]\begin{quote}
{
Given a prefix-free machine $M$, one can consider the `halting probability' of $M$, defined by
$\Omega_M\!=\!\sum_{M(\sigma)\downarrow}
2^{-|\sigma|}.$
The term `halting probability' is justified by the following observation: a prefix-free
machine $M$ can be naturally extended to a partial functional from $2^\omega$, the set of
infinite binary sequences, to $2^{<\omega}$, where for $X\in 2^\omega$, $M(X)$ is defined to be $M(\sigma)$ if
some $\sigma\in {\rm dom}(M)$ is a prefix of $X$, and $M(X)\uparrow$ otherwise. The prefix-freeness of $M$
[...]
ensures that this extension is well-defined. With this point of view,
$\Omega_M$ is simply $\mu\{X \in 2^\omega: M(X) \downarrow\}$, where $\mu$ is the uniform probability measure
(a.k.a. Lebesgue measure) on $2^\omega$, that is, the measure where each bit of $X$ is equal
to 0 with probability $1/2$ independently of all other bits.}
\end{quote}
\end{itemize}

See \cite[p.~207]{RS07}
 for a similar explanation.
So,  the expression  ``halting probability'' refers to the probability of some real numbers, not of finite binary strings.
Let us consider   a randomly given real number $\alpha\!\in\!(0,1]$. The probability that $\alpha$ is less than $\frac{1}{4}$ is, of course, $\frac{1}{4}$, since the length of $(0,\frac{1}{4})$ is $\frac{1}{4}$. The probability that $\alpha$ is rational is 0. Let us calculate the probability that the finite string $\texttt{01001}$ is a prefix of the unique infinite binary expansion  after $\texttt{0}\centerdot$ of $\alpha$ (see Definition~\ref{def:base2}). If $\alpha$ is like that, then $\alpha\!=\!(\!\!(\texttt{0}\centerdot \texttt{01001}x_1x_2x_3\dots)\!\!)_{\bf 2}$ for some bits $x_1,x_2,x_3,\cdots$. This means that $\alpha$ belongs to the interval $\mathbb{I}_{\{01001\}}$ (see Definition~\ref{def:iell}), so the probability is $\frac{1}{32}$ (see Example~\ref{ex:iell}).

\begin{lemma}[probability of some events on real numbers]\label{lem:prob}
\noindent

\indent
(1) The probability that a randomly given real $\alpha\!\in\!(0,1]$ has a fixed finite binary string $\sigma$ as a prefix  in its infinite binary expansion after $0\centerdot$ is $\mathfrak{L}(\mathbb{I}_\sigma)$.

(2) The probability that a randomly given real $\alpha\!\in\!(0,1]$ has a prefix from a fixed set of   finite binary strings $S\!\subseteq\!\boldsymbol\Sigma$    in its infinite binary expansion after $0\centerdot$ is $\mathfrak{L}(\bigcup_{\sigma\in S}\mathbb{I}_\sigma)$.
\end{lemma}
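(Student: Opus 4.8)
The plan is to read ``a randomly given real $\alpha\in(0,1]$'' as the assertion that $\alpha$ is distributed according to the uniform law on $(0,1]$, i.e.\ that the governing probability measure is the Lebesgue measure $\mathfrak{L}$ restricted to $(0,1]$. First I would record that this restriction really is a probability measure: $\mathfrak{L}$ is countably additive on the Lebesgue-measurable subsets of $(0,1]$, and $\mathfrak{L}\big((0,1]\big)=1$. Consequently, computing the probability of any event reduces to computing the Lebesgue measure of the set of reals realizing that event, provided that set is measurable.

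For part (1), the key observation is that the event in question is, verbatim, the set $\mathbb{I}_\sigma$ of Definition~\ref{def:iell}: by that definition $\mathbb{I}_\sigma$ consists of exactly those $\alpha\in(0,1]$ whose (unique, by Remark~\ref{rem:base2}) infinite binary expansion after $\texttt{0}\centerdot$ has $\sigma$ as a prefix. Hence the sought probability is $\mathfrak{L}(\mathbb{I}_\sigma)$, which by Lemma~\ref{lem:big}(2) equals $2^{-|\sigma|}$. The one point that deserves care is the treatment of the two endpoints of $\mathbb{I}_\sigma$, since dyadic rationals carry both a terminating and a non-terminating expansion; here the half-open shape $\big((\!\!(\texttt{0}\centerdot\sigma)\!\!)_{\bf 2},(\!\!(\texttt{0}\centerdot\sigma\texttt{111}\dots)\!\!)_{\bf 2}\big]$ is precisely calibrated so that it collects the reals whose \emph{infinite} expansion begins with $\sigma$; and in any case the two endpoints form a set of $\mathfrak{L}$-measure zero and so cannot affect the value.

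For part (2), I would note that ``$\alpha$ has a prefix drawn from $S$'' is the event $\bigcup_{\sigma\in S}\mathbb{I}_\sigma$, i.e.\ the union over $\sigma\in S$ of the part-(1) events. Since $S\subseteq\boldsymbol\Sigma$ is at most countable, this is a countable union of measurable sets and is therefore measurable, so its probability is $\mathfrak{L}\big(\bigcup_{\sigma\in S}\mathbb{I}_\sigma\big)$ by the first paragraph. (When $S$ happens to be prefix-free, Lemma~\ref{lem:big}(4) further identifies this measure with $\varOmega_S$, recovering the earlier computations; but part (2) as stated requires no prefix-freeness.)

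I expect the only genuine obstacle to be the endpoint bookkeeping flagged in the second paragraph — verifying that ``$\sigma$ is a prefix of the infinite expansion of $\alpha$'' translates to membership in the half-open interval $\mathbb{I}_\sigma$ rather than its closure or its interior. Everything else is a direct transcription: the probability of an event under the uniform law is the Lebesgue measure of the corresponding subset of $(0,1]$, and Lemma~\ref{lem:big} has already supplied both the measurability and the numerical value of the sets involved.
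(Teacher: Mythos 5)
Your proposal is correct and follows essentially the same route as the paper's own (much terser) proof: identify the event with the set $\mathbb{I}_\sigma$ (respectively $\bigcup_{\sigma\in S}\mathbb{I}_\sigma$) and read off its Lebesgue measure under the uniform law on $(0,1]$. The extra care you take with measurability of the countable union and with the endpoints of the half-open interval is sound but not something the paper spells out.
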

\begin{proof}
\noindent

\indent
(1) Every such $\alpha$ belongs to the interval
$\mathbb{I}_\sigma$ (see Definition~\ref{def:iell}). So, the   probability is $\mathfrak{L}(\mathbb{I}_\sigma)$; cf. \cite{Linde24}. Item
(2) follows similarly.
\end{proof}

\begin{corollary}[Omega numbers as probabilities of real numbers]
\label{cor:prob}
\noindent

\indent
(1) The probability that a random  real $\alpha\!\in\!(0,1]$ has a prefix from a fixed prefix-free set of   finite binary strings $S\!\subseteq\!\boldsymbol\Sigma$    in its infinite binary expansion after $0\centerdot$ is ${\varOmega}_S$.

(2) Chaitin's $\boldsymbol\Omega$ is the probability that the unique infinite binary expansion after $0\centerdot$ of a randomly given real $\alpha\!\in\!(0,1]$  contains a finite binary strings as a  prefix   that is the binary code of a halting  input-free program.
\end{corollary}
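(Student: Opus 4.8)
The plan is to deduce both parts directly from the machinery already assembled, so that the corollary becomes essentially a bookkeeping step combining Lemma~\ref{lem:prob} with Lemma~\ref{lem:big}; I do not expect to need any new estimate.

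First I would dispatch part~(1). By Lemma~\ref{lem:prob}(2), the probability that a randomly chosen $\alpha\in(0,1]$ has some prefix from $S$ in its infinite binary expansion after $\texttt{0}\centerdot$ is exactly $\mathfrak{L}\big(\bigcup_{\sigma\in S}\mathbb{I}_\sigma\big)$. The only thing left is to evaluate this Lebesgue measure when $S$ is prefix-free, and here I would invoke Lemma~\ref{lem:big}(4), which states precisely that $\mathfrak{L}\big(\bigcup_{\sigma\in S}\mathbb{I}_\sigma\big)=\varOmega_S$ for prefix-free $S$. Chaining the two equalities yields that the sought probability equals $\varOmega_S$, finishing part~(1).

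For part~(2), I would simply specialise part~(1) to the set $\mathbb{H}$ of binary codes of halting input-free programs (Definition~\ref{def:hp}). The hypothesis of part~(1) requires $\mathbb{H}$ to be prefix-free; this is where I would spend the one sentence of genuine checking, noting that $\mathbb{H}\subseteq\mathbb{P}$ and that $\mathbb{P}$ is prefix-free under Convention~\ref{conv}, so every subset of it, in particular $\mathbb{H}$, is prefix-free as well. Applying part~(1) to $\mathbb{H}$ then gives that the stated probability is $\varOmega_{\mathbb{H}}$, which is $\boldsymbol\Omega$ by Definition~\ref{def:omega-n}.

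The step carrying the real content is not in this corollary at all but one level down, in Lemma~\ref{lem:big}(4): the passage from the measure of the union to the sum $\varOmega_S$ rests on the pairwise disjointness of the $\mathbb{I}_\sigma$ (item~(3), which uses prefix-freeness), together with countable additivity of $\mathfrak{L}$ and the length computation $\mathfrak{L}(\mathbb{I}_\sigma)=2^{-|\sigma|}$ (item~(2)). Conceptually, prefix-freeness is exactly what guarantees that at most one $\sigma\in S$ can be a prefix of a given $\alpha$, so the events ``$\sigma$ is a prefix of $\alpha$'' do not overlap and their probabilities add without double counting. That is the single place where the prefix-free hypothesis does any work, and since Lemma~\ref{lem:big} is already available, the corollary follows with no further estimation.
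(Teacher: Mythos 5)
Your proposal is correct and follows essentially the same route as the paper, which likewise derives part~(1) by combining Lemma~\ref{lem:prob}(2) with Lemma~\ref{lem:big}(4) and obtains part~(2) as the special case $S=\mathbb{H}$. Your added remark that $\mathbb{H}$ is prefix-free because $\mathbb{H}\subseteq\mathbb{P}$ under Convention~\ref{conv} is a small but worthwhile explicit check that the paper leaves implicit.
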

\begin{proof}
\noindent

\indent
(1) follows from Lemma~\ref{lem:prob}(2)  and Lemma~\ref{lem:big}(4). Item
(2) is a special case of (1) when $S\!=\!\mathbb{H}$ (see Definition~\ref{def:hp}).
\end{proof}

After all, $\boldsymbol\Omega$ is the probability of something, an event on real numbers.

\section{A Short History and Some Suggestions}

\begin{flushright}{\small
\textsl{\texttt{Mathematics is the science of learning how {\em not} to compute}}.
\\
    ---{\em Heinrich Maschke}  (1853--1908); see \cite[p.~667]{Glazier16}. }
\end{flushright}

\medskip

There are very many papers and some books on the so-called {\em Halting Probability} $\boldsymbol\Omega$, also known as Chaitin's Number. We wanted to see if $\boldsymbol\Omega$ defines a probability for the halting problem. And if so, on what measure? What is the distribution of that probability? What is the sample space?
We hopefully gave a systematic understanding of this number and will suggest (\ref{subs:sug} below) some measures based on which a halting probability can be defined, with all the glory of mathematical rigor. Let us observe again that a real number cannot be called a probability if it is just between $0$ and $1$; there should be a measure and a space for a probability that satisfies Kolmogorov axioms (see, e.g.,  \cite{Linde24}): that $\mu(\mathbb{S})\!=\!1$ and $\mu(\bigcup_iS_i)\!=\!\sum_i\mu(S_i)$, where $\mathbb{S}$ is the sample space, $\{S_i\}$ is an arbitrary indexed family  of pairwise disjoint  subsets of $\mathbb{S}$, and the {\em partial} function  $\mu\colon\mathscr{P}(\mathbb{S})\rightarrow[0,1]$ is the probability measure (defined on the so-called {\em measurable} subsets of $\mathbb{S}$).

The number $\boldsymbol\Omega$ was introduced by Chaitin \cite[p.~337]{Chaitin75} in 1975,
when it was denoted by $\omega$. The symbol $\Omega$ appears in Chaitin's second {\em Scientific American} paper \cite{Chaitin88}, where it was defined as the probability that ``a completely random program will halt'' (p.~80).
This is sometimes called ``the secret number,'' ``the magic number,'' ``the number of wisdom,'' etc. \cite[p.~178]{RS07}; it is also claimed ``to hold the mysteries of the universe'' \cite{Gardner79}.
It was stated in \cite{BFGM06}  that  ``The first example of a random real was Chaitin's $\Omega$'' (p.~1411); but as Barmpalias put it in \cite[p.~180, fn.~9]{Barmpalias20}, ``Before Chaitin's discovery, the most concrete Martin-L\"of random real known was a $2$-quantifier definable number exhibited [by] Zvonkin and Levin'' (in 1970).

\subsection{\bf Some Suggestions}\label{subs:sug}

\begin{definition}[integer code, $\mathcal{H}$]\label{def:int}
\noindent

\indent
Every finite  binary string $\sigma\!\in\!\boldsymbol\Sigma$ has an integer code defined as $(\!\!(\texttt{1}\sigma)\!\!)_{\bf 2}-1$, illustrated as follows.
\begin{center}
 \begin{tabular}{c||c|c|c|c|c|c|c|c|c|c|c}
\textrm{binary string} & $\texttt{0}$ & $\texttt{1}$ & $\texttt{00}$ & $\texttt{01}$ & $\texttt{10}$ & $\texttt{11}$ & $\texttt{000}$ & $\texttt{001}$ & $\texttt{010}$ & $\texttt{011}$  & $\!\!\!\!\centerdot\centerdot\centerdot\!\!\!$ \\
  \hline
 \textrm{integer code} & $1$ & $2$ & $3$ & $4$ & $5$ & $6$ & $7$ & $8$ & $9$ & $10$ &  $\!\boldsymbol\cdots\!\!$
\end{tabular}
\end{center}

\noindent
Let $\mathcal{H}$ be the set of the integer codes of all the strings in $\mathbb{H}$ (see Definition~\ref{def:hp}).
\hfill\ding{71}\end{definition}

\begin{example}[integer code]\label{ex:int}
\noindent

\indent
 The integer code of the binary string $\texttt{01001}$ is  40, and the finite binary string with the integer code 25 is $\texttt{1010}$ (see Example~\ref{ex:base2}).
\hfill\ding{71}\end{example}

The  $\boldsymbol\Omega$ number has many interesting properties that have attracted the attention of the brightest minds and made them publish papers in the most prestigious journals and collection books.
Most properties of $\boldsymbol\Omega$, which we
proved not to be
a probability of random strings, are also possessed by  $K\!=\!\sum_{n\in\mathcal{H}}2^{-n}$ (see \cite[p.~33]{Gardner79}).
This number is in the interval $(0,1)$, so it can be a halting probability with a good measure: for a set of positive integers $S\!\subseteq\!\mathbb{N}^+$,   let $\mathfrak{p}(S)\!=\!\sum_{n\in S}2^{-n}$. Then all the probability axioms are satisfied: $\mathfrak{p}(\mathbb{N}^+)\!=\!1$ and $\mathfrak{p}(\bigcup_i\!S_i)\!=\!
\sum_i\mathfrak{p}(S_i)$ for every pairwise disjoint $\{S_i\!\subseteq\!\mathbb{N}^+\}_i$. One question now is: why not take this number as {\em a halting probability}? Notice that this has some non-intuitive properties: if $E$ is the set of all the even positive integers and $O$ is the set of all the odd positive integers, then {\em the probability} that a binary string has an {\em even} integer code becomes  $\mathfrak{p}(E)\!=\!\sum_{n\in E}2^{-n}\!=\!\frac{1}{3}$, and {\em the probability} that a binary string has an {\em odd} integer code turns out to be  $\mathfrak{p}(O)\!=\!\sum_{n\in O}2^{-n}\!=\!\frac{2}{3}$, twice the evenness probability!

For $\boldsymbol\Omega$,  the geometric distribution (see, e.g., \cite{Linde24}) is in play, with the parameter $p\!=\!\frac{1}{2}$. Why not take other parameters, such as $p\!=\!\frac{1}{3}$  and then define a halting probability as $\sum_{\sigma\in\mathbb{H}} 3^{-|\sigma|}$
(or $\sum_{n\in\mathcal{H}}2\cdot 3^{-n}$)? Note that $\sum_{n>0}2\cdot 3^{-n}\!=\!1$, and   Kraft's inequality
 applies here too: $\sum_{\sigma\in S}3^{-|\sigma|}\!\leqslant\!1$ for every prefix-free set $S\subseteq\boldsymbol\Sigma$.  Or, why not Poisson's distribution (see, e.g., \cite{Linde24}) with a parameter $\lambda$? Then,  a halting probability could be   $\sum_{n\in\mathcal{H}}
\frac{e^{-\lambda}\lambda^{-n}}{n!}$.
One key relation in defining  $K$  is the elementary formula $\sum_{n>0}2^{-n}\!=\!1$. Let $\{\alpha_n\}_{n>0}$ be any sequence of positive real numbers such that $\sum_{n>0}\alpha_n\!=\!1$. Then one can {\em define} a halting probability as $\sum_{n\in\mathcal{H}}\alpha_n$ or
$\sum_{\sigma\in\mathbb{H}}2^{-|\sigma|}\alpha_{|\sigma|}$. Most, if not all, of the properties of $\boldsymbol\Omega$ should be possessed by these new {\em probabilities}.
 This seems like a wild, open area to explore.

\paragraph{The Conclusion:}
The number $\boldsymbol\Omega$  {\em is  not} the probability that a randomly given finite binary string is the binary  code of a halting  input-free program under {\em any} probability measure. It {\em is} the probability that   the unique infinite binary expansion after $0\centerdot$  of a randomly given real number in the unit interval has a prefix that is the binary   code of a halting input-free program.
There is no unique {\em halting probability} of finite binary strings, and  one can get different values for it by different probability measures (over a fixed prefix-free programming language).
The following table summarizes our observations about $\boldsymbol\Omega$ and its approximations:

\begin{table}[h]
  \centering
  \begin{tabular}{ c c l }
    $\sum_{p\textrm{ halts}, |p|=\ell }\,2^{-|p|}$ & $\boldsymbol=$ & the probability of getting a halting input-free program   \\
   & & after  tossing a fair coin for $\ell$ times \\
        \hline
      $\sum_{p\textrm{ halts}, |p|\leqslant n}\,2^{-|p|}$ & $\boldsymbol>$ &
    the probability of getting  a halting input-free program   \\
   & & after tossing  a fair coin for some\,$\leqslant\!\,\!n$ times
   (large $n$)
   \\
    \hline
   $\boldsymbol\Omega = \sum_{p\textrm{ halts}}\,2^{-|p|}$ & $\boldsymbol>$ & the probability of getting  a halting input-free program   \\
   & & after tossing a fair coin for {\sl randomly finite} times \\
  \end{tabular}
  \label{table}
\end{table}


\printthanks


\end{document}